\documentclass[12pt]{amsart}
\usepackage[utf8]{inputenc}
\usepackage{hyperref}

\usepackage{color}
\usepackage {amssymb}
\usepackage{pifont}
\usepackage {amsmath}
\usepackage {amsthm}

\newcommand {\N}{\mathbb{N}}
\newcommand {\R}{\mathbb{R}}

\newcommand {\be}{\begin{equation}}
\newcommand {\ee}{\end{equation}}


\newtheorem{theorem}{Theorem}[section]
\newtheorem{definition}[theorem]{Definition}
\newtheorem{lemma}[theorem]{Lemma}
\newtheorem{corollary}[theorem]{Corollary}
\newtheorem{proposition}[theorem]{Proposition}
\newtheorem{remark}[theorem]{Remark}

\begin{document}

\title [Weighted Inequalities for the Fractional Laplacian]
{Weighted Inequalities for the Fractional Laplacian and the Existence of Extremals}

\author{Pablo   De N\'apoli}
\address{IMAS (UBA-CONICET) and Departamento de Matem\'atica\\
Facultad de Ciencias Exactas y Naturales\\
Universidad de Buenos Aires\\
Ciudad Universitaria\\
1428 Buenos Aires\\
Argentina}
\email{pdenapo@dm.uba.ar}

\author{Irene Drelichman}
\address{IMAS (UBA-CONICET) \\
Facultad de Ciencias Exactas y Naturales\\
Universidad de Buenos Aires\\
Ciudad Universitaria\\
1428 Buenos Aires\\
Argentina}
\email{irene@drelichman.com}

\author{Ariel Salort}
\address{IMAS (UBA-CONICET) and Departamento de Matem\'atica\\
Facultad de Ciencias Exactas y Naturales\\
Universidad de Buenos Aires\\
Ciudad Universitaria\\
1428 Buenos Aires\\
Argentina}
\email{asalort@dm.uba.ar}


\thanks{Supported by ANPCyT under grant PICT 2014-1771, by CONICET under grant 11220130100006CO  and by Universidad de Buenos Aires under grants 20020120100050BA and 20020120100029BA. The authors are members of
CONICET, Argentina.}


\subjclass[2010]{Primary 35A15; Secondary 35A23, 35R11, 46E15}

\keywords{Sobolev spaces; fractional Laplacian; potential spaces; embedding theorems; power weights; extremals}

\begin{abstract}
In this article we obtain  improved versions of  Stein-Weiss and  Caffarelli-Kohn-Nirenberg inequalities,  involving  Besov norms of negative smoothness. As an application of the former,
we derive the existence of extremals of the Stein-Weiss inequality in certain
cases, some of which are not contained in the celebrated theorem of E. Lieb \cite{Lieb}.
\end{abstract}

\maketitle

\section{Introduction}

In the Euclidean space $\R^n$, it is well-known that negative powers of the
Laplacian admit the integral representation in terms of the Riesz potential
or fractional integral operator: 
$$ 
(-\Delta)^{-s/2} f(x)= I_s(f) = c(n,s) \int_{\R^n} \frac{f(y)}{|x-y|^{n-s}} \; dy
\quad 0 < s < n.
$$

One basic result for this operator is the Stein-Weiss inequality, which gives its behaviour in Lebesgue spaces with 
power weights:

\begin{theorem}\cite[Theorem $B^*$]{SW} \label{SW}
Let $n \geq 1, 0<s<n, 1<p \leq r<\infty, \alpha<\frac{n}{p'},
\gamma>-\frac{n}{r}, \alpha \ge  \gamma $, and
\be \frac{1}{r}=\frac{1}{p}+\frac{\alpha-\gamma-s}{n}. \label{SW-scaling} \ee 
Then,
\be
\| |x|^{\gamma} (-\Delta)^{-s/2} f  \|_{L^r}  \leq C \| |x|^\alpha f  \|_{L^p} \quad \forall f\in L^p(\R^n,|x|^{\alpha p}).\; \label{SW-ineq}
\ee
\end{theorem}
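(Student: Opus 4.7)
\medskip

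\noindent\textbf{Proof plan.}
The plan is to split the Riesz potential according to the three regions determined by the relative sizes of $|x|$ and $|y|$:
\[
I_s f(x) = T_1 f(x) + T_2 f(x) + T_3 f(x),
\]
where the three summands restrict the defining integral to $\{|y|<|x|/2\}$, $\{|x|/2 \le |y| \le 2|x|\}$, and $\{|y|>2|x|\}$ respectively. This decomposition is natural because on the outer regions the kernel simplifies to $|x-y|^{s-n}\sim \max(|x|,|y|)^{s-n}$, while on the middle (diagonal) region the power weights $|x|^{\gamma}$ and $|y|^{\alpha}$ are comparable.

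On $T_1$ one has $T_1 f(x) \lesssim |x|^{s-n}\int_{|y|<|x|/2}|f(y)|\,dy$, and after passage to polar coordinates this reduces to a one-dimensional weighted Hardy operator on $(0,\infty)$ with measure $r^{n-1}\,dr$, acting between $L^p(r^{\alpha p}\cdot)$ and $L^r(r^{\gamma r}\cdot)$. The classical Muckenhoupt-type criterion for such Hardy operators gives boundedness precisely under the convergence condition $\alpha < n/p'$ (together with the scaling relation (\ref{SW-scaling})). The operator $T_3$ is handled symmetrically by a dual Hardy inequality and delivers the hypothesis $\gamma > -n/r$.

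The middle piece $T_2$ is the delicate one. Here I would dyadically decompose $f=\sum_{k}f_k$ with $f_k$ supported in the annulus $A_k=\{2^k\le|y|<2^{k+1}\}$; then for $x\in A_j$ only indices $k\in\{j-1,j,j+1\}$ contribute to $T_2$. On each such annular pair the weights $|x|^{\gamma}$ and $|y|^{-\alpha}$ are comparable to $2^{j\gamma}$ and $2^{-k\alpha}\sim 2^{-j\alpha}$. Unweighted Hardy--Littlewood--Sobolev yields $\|I_s f_k\|_{L^q}\lesssim \|f_k\|_{L^p}$ with $1/q=1/p-s/n$, and since the scaling identity forces $1/r-1/q=(\alpha-\gamma)/n\ge 0$, Hölder's inequality on the finite-measure annulus $A_j$ converts the $L^q$ bound to the required $L^r$ bound, the loss of $2^{jn(1/r-1/q)}=2^{j(\alpha-\gamma)}$ being exactly absorbed by the weight disparity. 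Summing in $j$ (the supports $A_j$ are essentially disjoint) produces the weighted $L^r$ estimate for $T_2$.

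The main obstacle is the middle piece $T_2$, precisely because the HLS exponent $q$ does not match $r$ unless $\alpha=\gamma$: the hypothesis $\alpha\ge\gamma$ together with the scaling relation are what make Hölder's inequality on annuli perform the needed exponent correction. Additional care is required in the borderline case $p=n/s$ (where $q=\infty$ and a $\mathrm{BMO}$-type substitute is needed) and in interpreting $T_1,T_3$ when $\alpha$ or $\gamma$ have unfavorable signs; in these cases one complements the Hardy argument with Schur's test and the geometric decay $2^{(j-k)(\alpha - n/p')}$, whose summability is guaranteed exactly by the strict inequalities $\alpha<n/p'$ and $\gamma>-n/r$.
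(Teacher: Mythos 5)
This statement is Theorem~$B^*$ of Stein and Weiss, which the paper quotes from \cite{SW} without proof, so there is no internal argument to compare against; your proposal has to be judged on its own. Your outline is the standard (and correct) modern route to Stein--Weiss: split $I_s f$ into the off-diagonal pieces $|y|<|x|/2$, $|y|>2|x|$ and the diagonal piece $|x|/2\le|y|\le 2|x|$; reduce the off-diagonal pieces to one-dimensional Hardy inequalities in the radial variable, whose Muckenhoupt conditions (for $p\le r$) reduce, after using the scaling relation \eqref{SW-scaling}, exactly to $\alpha<n/p'$ and $\gamma>-n/r$; and treat the diagonal piece by dyadic annuli, where the weights are constant up to factors of $2$, combining unweighted Hardy--Littlewood--Sobolev with H\"older on the finite-measure annulus and then summing via $\ell^p\hookrightarrow\ell^r$ (this last step is where $p\le r$ enters for $T_2$, and the exponent bookkeeping $1/r-1/q=(\alpha-\gamma)/n\ge0$ is where $\alpha\ge\gamma$ enters, exactly as you say). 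The one place your sketch is slightly awkward is the endpoint $p=n/s$: no BMO substitute is needed, since that case can only occur when $\alpha>\gamma$ (if $\alpha=\gamma$ then $q=r<\infty$ forces $p<n/s$), and then one can replace HLS~$+$~H\"older on each annulus by Young's inequality with the kernel $|z|^{s-n}\chi_{\{|z|\lesssim 2^j\}}$, whose $L^t$ norm with $1/t=1-s/n+(\alpha-\gamma)/n$ is finite precisely because $\alpha>\gamma$ and contributes the factor $2^{j(\alpha-\gamma)}$ absorbed by the weights. With that small repair the argument is complete in outline, though of course the Hardy and annulus estimates would need to be written out in full for a self-contained proof.
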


Equivalently, we can rewrite this result as fractional Sobolev
inequality, namely,
\be \| |x|^{\gamma} u  \|_{L^r}  \leq C \| |x|^\alpha (-\Delta)^{s/2} u  
\|_{L^p} \quad \forall u \in \dot{H}^{s,p}_\alpha(\R^n) \label{fractional-Sobolev} \ee
 meaning that we
have a continuous embedding
\be 
\dot{H}^{s,p}_\alpha(\R^n) \subset L^r(\R^n,|x|^{\gamma r})$$
where 
$$ \dot{H}^{s,p}_\alpha(\R^n) = \{ u = (-\Delta)^{-s/2} f : f \in 
L^p(\R^n,|x|^{\alpha p}) \} \label{Sobolev-definition}
\ee
is the weighted
homogeneous Sobolev space of potential
type,
 which is a Banach space with the norm $ \| u \|_{\dot{H}^{s,p}_\alpha} = \| f |x|^\alpha \|_{L^p}.$

\medskip
 
We remark that this embedding is not compact due to the scaling invariance
of the Stein-Weiss inequality. In other words, the scaling condition \eqref{SW-scaling} means that $r$ plays the role of the
critical Sobolev exponent in the weighted setting. 

\medskip 

Our first aim in this work is to obtain an improved version of \eqref{SW-ineq} and \eqref{fractional-Sobolev}. More precisely, for suitable values of the parameters we will prove that
there holds:
\be \| |x|^\gamma (-\Delta)^{-s/2} f  \|_{L^r} \leq 
C \; \| |x|^{\alpha} f  \|_{L^p}^\theta \;  \| f \|_{\dot{B}^{-\mu-s}_{\infty,\infty}}^{1-\theta}
\label{nuestra-desigualdad-1}
\ee
for every $f \in L^p(\R^n,|x|^{\alpha p})  \cap \dot{B}^{-\mu-s}_{\infty,\infty}$, or, equivalently, \be \| |x|^\gamma u \|_{L^r} \leq 
C \; \| |x|^{\alpha} (-\Delta)^{s/2} u  \|_{L^p}^\theta \;  \| u \|_{\dot{B}^{-\mu}_{\infty,\infty}}^{1-\theta}
\label{nuestra-desigualdad-2}
\ee
for every $u\in \dot{H}^{s,p}_\alpha(\R^n) \cap 
\dot{B}^{-\mu}_{\infty,\infty}$, where the Besov norm of negative smoothness is defined in terms of the heat kernel (see Section 2 for a precise definition).

The reader will observe that inequality \eqref{nuestra-desigualdad-2} is reminiscent of the well-known Caffarelli-Kohn-Nirenberg first order interpolation inequality:
\begin{theorem}\cite{CKN}
Assume 
\begin{equation*}
p, q \ge 1, \quad r>0, \quad 0\le \theta \le 1, \quad \frac{1}{p}+\frac{\alpha}{n}, \quad \frac{1}{q}+\frac{\beta}{n}, \quad \frac{1}{r}+\frac{\gamma}{n} >0,
\end{equation*}
where
\begin{equation*}
\label{sigma}
\gamma = \theta\sigma + (1-\theta)\beta.
\end{equation*}
Then, there exists a positive constant $C$ such that the following inequality holds for all $u\in C_0^\infty(\mathbb{R}^n)$
\begin{equation}
\label{ckn}
\| |x|^\gamma u\|_{L^r} \le C \| |x|^\alpha \nabla u \|_{L^p}^\theta \| |x|^\beta u\|_{L^q}^{1-\theta}
\end{equation}
if and only if the following relations hold:
\begin{equation*}
\label{reescale}
\frac{1}{r}+\frac{\gamma}{n} =  \theta \left( \frac{1}{p}+ \frac{\alpha-1}{n}\right) + (1-\theta)\left( \frac{1}{q}+ \frac{\beta}{n}\right)
\end{equation*}
\begin{align*}
&0 \le \alpha-\sigma& \qquad &\mbox{ if } \, \theta>0, &
\\ \nonumber \mbox{and} 
\\ &\alpha-\sigma \le 1&  \qquad &\mbox{ if } \, \theta>0 \quad \mbox{ and } \quad \frac{1}{p}+\frac{\alpha-1}{n}=\frac{1}{r}+\frac{\gamma}{n}.
\end{align*}
\end{theorem}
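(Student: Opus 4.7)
The plan is to derive the Caffarelli-Kohn-Nirenberg inequality by interpolating between the pure weighted Sobolev endpoint ($\theta=1$) and the trivial endpoint ($\theta=0$), using the Stein-Weiss inequality (Theorem \ref{SW}) with $s=1$ as the key input. Since $\theta=0$ reduces to $\||x|^\gamma u\|_{L^r} \le C\||x|^\gamma u\|_{L^r}$ (with $\beta=\gamma$, $q=r$), there is nothing to prove there; all the content lies in $\theta\in(0,1]$.

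For $\theta=1$, I would reduce the bound to Stein-Weiss via the classical pointwise estimate
$$|u(x)| \le c_n \int_{\R^n} \frac{|\nabla u(y)|}{|x-y|^{n-1}}\, dy = c_n\, I_1(|\nabla u|)(x), \qquad u\in C_0^\infty(\R^n),$$
and then apply Theorem \ref{SW} with exponents $p$ and $p^*$ and weight exponents $\alpha$ and $\sigma$, where $p^*$ is determined by $\frac{1}{p^*}=\frac{1}{p}+\frac{\alpha-1-\sigma}{n}$. The Stein-Weiss hypotheses translate directly to the CKN hypotheses: $\alpha<n/p'$ is assumed, $\sigma>-n/p^*$ follows from $\frac{1}{r}+\frac{\gamma}{n}>0$ together with scaling, and $\alpha\ge\sigma$ is precisely the condition $0\le\alpha-\sigma$.

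For the general case $\theta\in(0,1)$, I would use the decomposition $\gamma=\theta\sigma+(1-\theta)\beta$ to factor
$$|x|^{\gamma r}|u|^{r}=\bigl(|x|^{\sigma}|u|\bigr)^{\theta r}\bigl(|x|^{\beta}|u|\bigr)^{(1-\theta)r},$$
and apply H\"older's inequality with conjugate exponents $\frac{p^*}{\theta r}$ and $\frac{q}{(1-\theta)r}$. A short calculation shows that conjugacy, $\frac{\theta r}{p^*}+\frac{(1-\theta)r}{q}=1$, is equivalent to the CKN scaling relation together with the Stein-Weiss scaling from the previous step. The $\theta=1$ bound then controls the first factor, and the inequality \eqref{ckn} follows.

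The main obstacle I expect is the boundary restriction $\alpha-\sigma\le 1$ in the case $\frac{1}{p}+\frac{\alpha-1}{n}=\frac{1}{r}+\frac{\gamma}{n}$: here $\sigma$ must be chosen so that $p^*=r$, which forces exactly $\alpha-\sigma=1$, the sharpest case of Stein-Weiss; for $\alpha-\sigma>1$ with $\theta>0$ one needs instead to construct explicit counterexamples using bumps concentrated near the origin or infinity. Establishing the \emph{necessity} of the various parameter constraints via such scaling/concentration counterexamples is the technically delicate part of the argument, whereas the sufficiency, once the correct choice of $\sigma$ is identified, is an essentially mechanical combination of Stein-Weiss and H\"older, as sketched above.
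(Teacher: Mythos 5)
First, a point of order: the paper offers no proof of this statement. It is quoted as background directly from the original Caffarelli--Kohn--Nirenberg article \cite{CKN}, so there is no internal argument to compare yours against; your proposal has to be judged on its own terms. On those terms, your sketch captures a standard and correct way to obtain \emph{part} of the sufficiency half --- the pointwise bound $|u(x)|\leq c_n I_1(|\nabla u|)(x)$, Theorem \ref{SW} with $s=1$, then H\"older on the factorization $|x|^{\gamma}|u|=(|x|^{\sigma}|u|)^{\theta}(|x|^{\beta}|u|)^{1-\theta}$ --- and this is indeed how the cases of CKN relevant to the rest of this paper are usually recovered. But it is not a proof of the theorem as stated.

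The concrete gaps are these. (i) You assert that $\alpha<\frac{n}{p'}$ ``is assumed''; it is not. The CKN hypotheses give only $\frac{1}{p}+\frac{\alpha}{n}>0$, i.e.\ $\alpha>-\frac{n}{p}$, with no upper bound, so Theorem \ref{SW} is unavailable for large $\alpha$. Similarly, the condition $\sigma>-\frac{n}{p^*}$ is equivalent to $\frac{1}{p}+\frac{\alpha-1}{n}>0$, which does not follow from $\frac{1}{r}+\frac{\gamma}{n}>0$ and the scaling relation (take $\theta$ small), and the cases $p=1$ and $0<r<1$ are allowed in CKN but excluded from Stein--Weiss and from your H\"older step. (ii) More seriously, routing everything through the $\theta=1$ endpoint $\||x|^{\sigma}u\|_{L^{p^*}}\leq C\||x|^{\alpha}\nabla u\|_{L^p}$ with $\frac{1}{p^*}=\frac{1}{p}+\frac{\alpha-\sigma-1}{n}$ forces $p\le p^*$, that is, $\alpha-\sigma\le 1$. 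CKN imposes $\alpha-\sigma\le 1$ \emph{only} on the boundary $\frac{1}{p}+\frac{\alpha-1}{n}=\frac{1}{r}+\frac{\gamma}{n}$; away from it the inequality is asserted, and true, for $\alpha-\sigma>1$ as well, and those cases cannot be reached by interpolating the two endpoints --- the $L^q$ factor must enter the estimate in an essential, non-H\"older way (in \cite{CKN} this is done by an elementary case analysis with one-dimensional inequalities and integration by parts, not via Riesz potentials). (iii) The necessity direction is half of the ``if and only if'' and is entirely deferred to unconstructed counterexamples. So the proposal establishes the inequality on the restricted range $1<p\le p^*$, $-\frac{n}{p}<\alpha<\frac{n}{p'}$, $\frac{1}{p}+\frac{\alpha-1}{n}>0$, $r\ge 1$, but not the full statement.
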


Indeed, in the local case $s=1$ we will also obtain an improvement of this inequality in some cases, namely, that
\be \| |x|^\gamma u \|_{L^r} \leq 
C \; \| |x|^{\alpha} \nabla u  \|_{L^p}^\theta \;  \| u \|_{\dot{B}^{-\mu}_{\infty,\infty}}^{1-\theta}
\label{nuestra-desigualdad-local}
\ee
holds for  every $u\in  \dot{H}^{1,p}_\alpha(\R^n) \cap 
\dot{B}^{-\mu}_{\infty,\infty} $ for an appropriate range of parameters.

\medskip

Our second aim in this paper is to prove the existence of extremals of inequality \eqref{SW-ineq} by means of a rearrangement-free technique, that allows us to obtain some previously unknown cases.  Let us recall that, by definition, the best constant $S$ in \eqref{SW-ineq} is
\begin{equation} \label{maxim2}
	S=\sup \frac{ \| |x|^\gamma   (-\Delta)^{-s/2} f \|_{L^r}  }{ \| |x|^\alpha f \|_{L^p} }
\end{equation}
where the supremum is taken over all the non vanishing functions $f \in L^p(\R^n, |x|^{\alpha
p})$. 

Best constants and the existence of minimizers/maximizers for
the Stein-Weiss, Sobolev and related inequalities have been studied
extensively in the literature, and it would be impossible to cite all the
references, but we mention some of them that are closely related to our work. 
In a celebrated paper,  E. Lieb  proved  \cite[Theorem 5.1]{Lieb} the existence of minimizers of
the Stein-Weiss inequality (in an equivalent formulation), under the
extra assumptions
$$ p<r,\, \alpha \geq 0, \,\gamma \leq 0. $$
The sign restriction on the exponents in his result comes from the fact that his 
argument is based on a symmetrization (rearrangement) technique.

\medskip

Recently, new rearrangement-free techniques for dealing with these
inequalities in the unweighted case have been introduced in \cite{Frank-Lieb, Frank-Lieb2, PP}. Avoiding
the use of rearrangements could be useful to extend the results to
settings where this technique is not available, for instance when mixed norms are considered (see, for instance, \cite{DL}), or in the setting of stratified Lie groups like in \cite{Ch, Frank-Lieb2}. Indeed,  our results can be extended to the latter setting without essential modifications (replacing $n$ by the
homogeneous dimension of the group, and the Euclidean norm by an homogeneous
norm in the group). However, we have chosen to work in the Euclidean space 
$\R^n$, in order to make our paper accessible to a broader audience. 

\medskip

Improved Sobolev inequalities play an important role for the proofs of existence of maximizers via concentration-compactness arguments. 
We can mention, for instance, \cite[Lemma 4.4]{Frank-Lieb2} where R. L. Frank and E. Lieb obtain an (unweighted) improved inequality with a Besov norm  in the context of the Heisenberg group, which they use derive sharp constants for analogues to the Hardy-Littlewood-Sobolev inequality in that group. 
More recently, the work of G. Palatucci 
and  A. Pisante \cite{PP},  deals with the existence of maximizers in
the unweighted case ($\alpha= \gamma=0$) in $\R^n$, also using and improved Sobolev inequality involving  
a Morrey norm  \cite[Theorem 1.1]{PP}, of which they give two different proofs. One of them, related to our work, is based on the refined
Sobolev inequality of P. Gérard, Y. Meyer and F. Oru \cite{GMO} involving a
Besov norm of negative smoothness, and an embedding result between  Besov
and Morrey spaces \cite[Lemma 3.4]{PP}. 

\medskip

Along these lines, the existence of maximizers of \eqref{maxim2} in the case $\alpha=0$ 
was considered in \cite{Yang}. However, we believe that the  argument in that paper is not correct. 
Indeed, we could not check the validity of inequality (3.2) in 
\cite{Yang}, as the application of the invoked rearrangement inequality would require a decreasing function, that is, a negative 
exponent in the previous inequality. Hence, we don't know whether it is possible to perform the argument
using the refined Sobolev inequality with the Morrey norm in the presence of weights. For this reason, we choose to work directly with the Besov norm and exploit some properties of the heat semigroup. With our improved inequality (Theorem \ref{teorema-besov-infty}), a weighted compactness result  (Proposition \ref{prop.simple}), and the so-called ``method of missing mass'' (invented by E. Lieb in \cite{Lieb}), we can prove the existence of minimizers of the Stein-Weiss inequality only in the case $p=2$ but, in turn, we can have any $\gamma$ in the range $-\frac{n}{r}<\gamma < \alpha$, thus extending the range $\gamma\ge 0$ of \cite{Lieb}.

\medskip

It should be mentioned that there is increasing literature devoted to the study of 
improved versions of the Sobolev-Gagliardo-Nirenberg and related inequalities for their own sake. Besides the above mentioned articles \cite{GMO} and \cite{PP}, we can mention  \cite{Bahouri2005, Ch, Hajaiej2012, Kolyada2014, Ledoux2003, VanSchaftingen2014}, among others. In the proof of our improved \label{nuestra-desigualdad-1} inequality, instead of using the Littlewood-Paley characterization of the Besov space 
(as in \cite{GMO} and \cite{Bahouri2005}), we use a simpler approach inspired by \cite{Ch}, which is based on the thermal definition of the Besov spaces and the representation of the negative powers of the Laplacian in terms of the heat semigroup. 
Besides that, we use the boundedness of the Hardy-Littlewood maximal function with  Muckenhoupt weights,  and the Stein-Weiss inequality. Moreover, our method does not involve 
truncations (as in \cite{Ledoux2003}), a technique which seems not to work in our context due to the non-local character of the fractional Laplacian; and makes no use of rearrangements (as in \cite{Kolyada2014}). 

\medskip

The rest of the paper is organized as follows: in Section 2 we recall the definition of the Besov spaces of negative smoothness and some results on the heat semigroup that will be used in the rest of the paper; in Section 3 we obtain the improved Stein-Weiss inequality \eqref{nuestra-desigualdad-1} and its 
rewritten form \eqref{nuestra-desigualdad-2}, as well as the improved Caffarelli-Kohn-Nirenberg inequality \eqref{nuestra-desigualdad-local}; in Section 4 we prove that the embedding given by \eqref{nuestra-desigualdad-2} is locally compact; and finally in Section 5 we use the method of missing mass and the results of the previous sections to prove the existence of extremals of the Stein-Weiss inequality.

\section{Weighted Estimates for the Heat Semigroup}

In this section we recall the definition of the Besov spaces of negative smoothness and collect some auxiliary results for the heat semigroup,
which will play a central role in our approach.

We recall that the heat semigroup $e^{\Delta t}$ in $\R^n$ is given by
\be e^{t \Delta} f(x_0)= f*h_t(x_0) = \int_{\R^n} f(x) h_t(x_0-x) \; dx \label{heat-semigroup} \ee
where
$$ h_t(x)= \frac{1}{(4\pi t)^{n/2}} \exp \left\{-\frac{|x|^2}{4t} \right\} $$ 
is the heat kernel.

We shall use the following thermic definition of the 
Besov spaces, which goes back to the work of T. Flett \cite{Flett}:

\begin{definition} 
\label{thermal-def}
For any real $\delta>0$ one can define the homogeneous Besov space 
$\dot B_{\infty,\infty}^{-\delta}$ as the space of tempered distributions $u$  on $\R^n$ (possibly modulo polynomials) for which the following norm 
$$
	\|u\|_{\dot B_{\infty,\infty}^{-\delta}} :=\sup_{t>0} t^{\delta/2} \|  e^{\Delta t} f \|_{L^\infty}
$$
is finite.
\end{definition}

 We shall also need the following result:

\begin{proposition}\cite[Proposition 3.2]{Luca} \label{heat-decay}
Let $n\geq 2$, $1 < p \leq q < +\infty$. Assume further that $\alpha, \beta$ satisfy the set of conditions 
 \begin{equation}
    \beta > -\frac nq,\qquad \alpha<\frac{n}{p'}, \qquad
    \alpha \geq \beta.
  \end{equation}
Then the following estimates hold:
\begin{equation}\label{PHeatDer}
     \||x|^{\beta} \partial^{\eta} e^{t\Delta}u \|_{L^{q}} 
     \leq \frac{c_{\eta}}{t^{(|\eta| + \frac{n}{p}-\frac{n}{q} + \alpha-\beta)/2}} 
     \| |x|^{\alpha} u \|_{L^{p}}, \qquad t>0,
     \end{equation}
provided that $|\eta| + \frac{n}{p}-\frac{n}{q} + \alpha-\beta \geq0$,
for each multi-index $\eta$. The range of admissible $p,q$ indices can be relaxed to $1\leq p \leq q \leq +\infty$ provided that $\alpha > \beta $. 
\end{proposition}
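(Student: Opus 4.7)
The plan is to reduce the estimate to the case $t = 1$ by a scaling argument, and then to dominate the convolution with $\partial^\eta h_1$ pointwise by a Riesz potential, at which point Theorem \ref{SW} closes the argument.

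\emph{Scaling.} Setting $v(y) := u(\sqrt{t}\,y)$, a direct computation from $h_t(z) = t^{-n/2} h_1(z/\sqrt{t})$ yields
\[
(\partial^\eta e^{t\Delta} u)(x) = t^{-|\eta|/2}\,(\partial^\eta e^{\Delta} v)(x/\sqrt{t}).
\]
Performing the change of variables $x = \sqrt{t}\,y$ in the weighted $L^q$ and $L^p$ norms produces precisely the factor $t^{-(|\eta| + n/p - n/q + \alpha - \beta)/2}$ on the right-hand side, so it suffices to prove
\[
\||x|^\beta \partial^\eta e^{\Delta} u\|_{L^q} \le C\,\||x|^\alpha u\|_{L^p}.
\]

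\emph{Kernel domination and Stein--Weiss.} Let $K := \partial^\eta h_1$. Since $h_1$ is Gaussian, $K \in \mathcal{S}(\R^n)$; in particular $|K(z)| \le C_N(1+|z|)^{-N}$ for every $N$. Set
\[
s := \frac{n}{p} - \frac{n}{q} + \alpha - \beta,
\]
which the hypotheses $p \le q$, $\alpha \ge \beta$, $\alpha < n/p'$ and $\beta > -n/q$ force to lie in $[0, n)$. When $s > 0$, taking $N \ge n-s$ and estimating $K$ separately on $\{|z| \le 1\}$ (by boundedness) and on $\{|z| > 1\}$ (by the Schwartz decay) yields
\[
|K(z)| \le C\,|z|^{-(n-s)}\qquad \text{for all } z \ne 0,
\]
so that $|(\partial^\eta e^{\Delta} u)(x)| \le C\, I_s(|u|)(x)$ pointwise. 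Since $s$ is defined so that $1/q = 1/p + (\alpha - \beta - s)/n$, Theorem \ref{SW} applied to $|u|$ completes the proof. The endpoint $s = 0$ forces $p = q$ and $\alpha = \beta$, in which case the bound reduces to the weighted $L^p$-boundedness of the convolution $K * (\cdot)$, which follows from the pointwise domination $|K*u| \le C\,Mu$ and the fact that $|x|^{\alpha p}$ is a Muckenhoupt $A_p$ weight in the range $-n/p < \alpha < n/p'$ (both bounds being implied by our assumptions once $\alpha = \beta$).

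I expect the main obstacle to be bookkeeping rather than any substantial new idea: checking that our choice of $s$ lands in the open interval $(0,n)$ exactly under the stated hypotheses, and handling the endpoint $s = 0$, together with the relaxed range $1 \le p \le q \le \infty$ under $\alpha > \beta$ strict, by means of the appropriate degenerate/endpoint versions of Stein--Weiss.
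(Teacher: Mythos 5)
First, a point of comparison: the paper does not prove this proposition at all --- it is quoted verbatim from \cite[Proposition 3.2]{Luca} --- so there is no internal argument to measure yours against, and a self-contained proof is welcome. For the stated range $1<p\le q<\infty$ your argument is correct. The parabolic rescaling does produce exactly the factor $t^{-(|\eta|+\frac{n}{p}-\frac{n}{q}+\alpha-\beta)/2}$; the exponent $s=\frac{n}{p}-\frac{n}{q}+\alpha-\beta$ lies in $[0,n)$ precisely because $p\le q$ and $\alpha\ge\beta$ give $s\ge 0$ while $\alpha<\frac{n}{p'}$ and $\beta>-\frac{n}{q}$ give $s<n$; the majorization $|\partial^\eta h_1(z)|\le C|z|^{-(n-s)}$ is valid for $s>0$; and the triple $(r,\gamma,\alpha)=(q,\beta,\alpha)$ satisfies every hypothesis of Theorem \ref{SW}, including the scaling relation. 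The endpoint $s=0$ (forcing $p=q$, $\alpha=\beta$) via $|K*u|\le CMu$ and the $A_p$ condition $-\frac{n}{p}<\alpha<\frac{n}{p'}$ is also fine.

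The genuine gap is the final sentence, where the relaxed range $1\le p\le q\le+\infty$ with $\alpha>\beta$ is deferred to ``appropriate degenerate/endpoint versions of Stein--Weiss.'' This cannot be repaired within your scheme: once you replace $\partial^\eta h_1$ by the power majorant $|z|^{-(n-s)}$ you have discarded the Gaussian decay, and for $q=\infty$ the resulting Riesz-potential bound is simply false in the cases needed. Concretely, for $q=\infty$ and $\beta=0$ one has $s=\frac{n}{p}+\alpha$ and the H\"older dual integral $\int|x-y|^{-(n-s)p'}|y|^{-\alpha p'}\,dy$ behaves like $\int|y|^{-n}\,dy$ at infinity, so $I_s(|u|)$ need not be bounded even though $\partial^\eta e^{\Delta}u$ is. This is not a peripheral case: $q=\infty$ with $\beta=0$ or $0<\beta<\alpha$ is exactly what the paper uses in Corollaries \ref{remark-embedding-Besov} and \ref{prop2} and in Proposition \ref{prop.simple}. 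The correct route for the relaxed range is to keep the Gaussian: after your rescaling, estimate $|x|^\beta|(\partial^\eta h_1 * u)(x)|$ by H\"older against $\|\partial^\eta h_1(x-\cdot)\,|\cdot|^{-\alpha}\|_{L^{p'}}$ and verify that $\sup_x |x|^\beta\|\partial^\eta h_1(x-\cdot)\,|\cdot|^{-\alpha}\|_{L^{p'}}<\infty$ under $\alpha<\frac{n}{p'}$ and $\alpha>\beta$ (splitting $|y|\le|x|/2$ from $|y|>|x|/2$ and using the rapid decay of the kernel on the first region); the case $p=1$ is handled analogously with an $L^\infty$ bound on the kernel. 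With that addition the argument would cover the full statement.
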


Specializing the above result for $q=\infty$ and $\beta=0$ we obtain the following corollaries:

\begin{corollary}
\label{remark-embedding-Besov}
We have an embedding
$$ L^p(\R^n,|x|^{\alpha p}) \subset \dot{B}^{-\mu-s}_{\infty,\infty}$$
with
$$ \mu= \frac{n}{p}+ \alpha-s $$
provided that $\mu>0$ and $0<\alpha<\frac{n}{p'}$. By the lifting property of Besov spaces (see \cite[Section 5.2.3, Theorem 1]{Triebel}), this implies that
$$ 
\dot{H}^{s,p}_\alpha(\R^n) \subset \dot{B}^{-\mu}_{\infty,\infty}.
$$
It follows that, with this choice of $\mu$, 
our inequality \eqref{nuestra-desigualdad-2} is indeed a refinement of the
weighted fractional Sobolev inequality \eqref{fractional-Sobolev}
  and that   
\eqref{nuestra-desigualdad-local} is in some cases a refinement 
of the  Caffarelli-Kohn-Nirenberg inequality \eqref{ckn},  in the sense of \cite{PP}.
\end{corollary}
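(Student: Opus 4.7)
The plan is to apply Proposition \ref{heat-decay} directly, specializing the parameters as indicated: take $q=\infty$, $\beta=0$, and the multi-index $\eta=0$. Under the hypotheses $0<\alpha<\frac{n}{p'}$, the relaxed condition $\alpha>\beta=0$ holds, so Proposition \ref{heat-decay} gives
\begin{equation*}
\| e^{t\Delta} u \|_{L^\infty} \leq \frac{c}{t^{(n/p+\alpha)/2}} \, \| |x|^{\alpha} u \|_{L^p}, \qquad t>0,
\end{equation*}
for every $u\in L^p(\R^n,|x|^{\alpha p})$. The exponent $(n/p+\alpha)/2$ is nonnegative by hypothesis, so the hypothesis $|\eta|+\frac{n}{p}-\frac{n}{q}+\alpha-\beta\geq 0$ is satisfied.

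Next I would choose $\mu$ so that $\mu+s=\frac{n}{p}+\alpha$, that is, $\mu=\frac{n}{p}+\alpha-s$, so the decay exponent becomes $(\mu+s)/2$. Multiplying the inequality above by $t^{(\mu+s)/2}$ and taking the supremum over $t>0$, the thermal characterization of Besov spaces (Definition \ref{thermal-def}) yields
\begin{equation*}
\|u\|_{\dot B^{-\mu-s}_{\infty,\infty}} = \sup_{t>0} t^{(\mu+s)/2} \| e^{t\Delta} u \|_{L^\infty} \leq c \, \| |x|^{\alpha} u \|_{L^p},
\end{equation*}
which is the continuous embedding $L^p(\R^n,|x|^{\alpha p}) \subset \dot B^{-\mu-s}_{\infty,\infty}$. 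The requirement $\mu>0$ is simply what makes this a negative-smoothness Besov space within the scale being considered.

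For the second embedding, I would invoke the lifting property of homogeneous Besov spaces (\cite[Section 5.2.3, Theorem 1]{Triebel}), which states that $(-\Delta)^{s/2}$ is an isomorphism between $\dot B^{-\mu}_{\infty,\infty}$ and $\dot B^{-\mu-s}_{\infty,\infty}$. Given $u \in \dot H^{s,p}_\alpha(\R^n)$, by definition $u=(-\Delta)^{-s/2}f$ with $f \in L^p(\R^n,|x|^{\alpha p})$; by the first part, $f \in \dot B^{-\mu-s}_{\infty,\infty}$, and by the lifting property $u=(-\Delta)^{-s/2}f \in \dot B^{-\mu}_{\infty,\infty}$, with a norm estimate of the form $\|u\|_{\dot B^{-\mu}_{\infty,\infty}}\lesssim \|f\|_{\dot B^{-\mu-s}_{\infty,\infty}} \lesssim \||x|^\alpha f\|_{L^p} = \|u\|_{\dot H^{s,p}_\alpha}$.

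No real obstacle is expected: the only subtle point is checking that the endpoint case $q=\infty$ is admissible in Proposition \ref{heat-decay}, which is handled by the final sentence of that proposition once we verify $\alpha>\beta$ strictly; the strict inequality $\alpha>0$ is precisely what enforces this. The final remark about \eqref{nuestra-desigualdad-2} refining \eqref{fractional-Sobolev} follows immediately, since with this choice of $\mu$ the weighted Sobolev norm controls the Besov norm, so \eqref{nuestra-desigualdad-2} is sharper than \eqref{fractional-Sobolev} up to the interpolation parameter $\theta$.
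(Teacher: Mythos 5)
Your proposal is correct and follows exactly the route the paper intends: the corollary is stated immediately after the sentence ``Specializing the above result for $q=\infty$ and $\beta=0$,'' so the intended proof is precisely your application of Proposition \ref{heat-decay} with $q=\infty$, $\beta=0$, $\eta=0$ (using the relaxed endpoint condition $\alpha>\beta$), combined with the thermal definition of the Besov norm for $\delta=\mu+s=\frac{n}{p}+\alpha$ and the lifting property for the second embedding. You have merely written out the details the paper leaves implicit; there is no gap.
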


\begin{corollary} \label{prop2}
Assume that
$$ 0< \alpha < \frac{n}{p^\prime}$$ 
and fix $t>0$. Then
\begin{equation} \label{ecc.1}
 |e^{t\Delta} f(x_0)| \leq C t^{-\frac12 (\frac{n}{p}+\alpha)} \| |x|^\alpha f \|_{L^p} ,
\end{equation}
and
\begin{equation} \label{ecc.2}
 | \partial_{x_j} e^{t\Delta} f(x_0)| \leq C t^{-\frac12 (\frac{n}{p}+\alpha+1)} \| |x|^\alpha f \|_{L^p} \quad j=1,2,\ldots, n 
\end{equation}
for any $x_0$ and any $f \in L^p(\R^n,|x|^{\alpha p})$.
\end{corollary}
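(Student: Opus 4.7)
The plan is to derive both estimates as direct specializations of Proposition~\ref{heat-decay} to the boundary case $q=\infty$, $\beta=0$, taking the multi-index $\eta=0$ for \eqref{ecc.1} and $|\eta|=1$ for \eqref{ecc.2}. After obtaining the $L^\infty$ bound for $\partial^\eta e^{t\Delta}f$, evaluating at an arbitrary $x_0\in\R^n$ converts the norm bound into the claimed pointwise estimate.

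First I would verify that the hypotheses required by Proposition~\ref{heat-decay} remain valid in this specialization. The condition $\alpha<n/p'$ is given by assumption; the condition $\alpha\ge\beta$ holds since $\alpha>0=\beta$; and the scaling condition $|\eta|+n/p-n/q+\alpha-\beta\ge 0$ reduces to $|\eta|+n/p+\alpha\ge 0$, which is automatic. The only subtle point is that the primary range of the proposition excludes $q=+\infty$, but the statement explicitly allows the relaxed range $1\le p\le q\le +\infty$ provided the strict inequality $\alpha>\beta$ holds; since we assume $\alpha>0=\beta$, this is exactly the configuration that is permitted.

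With the hypotheses in hand, substituting $\eta=0$ into \eqref{PHeatDer} gives
$$\|e^{t\Delta}f\|_{L^\infty}\le C\,t^{-(n/p+\alpha)/2}\,\| |x|^\alpha f\|_{L^p},$$
and, since this is a pointwise $L^\infty$ bound, evaluating at an arbitrary $x_0$ yields \eqref{ecc.1}. Taking $\eta$ to be the $j$-th coordinate unit multi-index gives $|\eta|=1$ and, after the analogous pointwise evaluation, produces \eqref{ecc.2}. No substantive obstacle arises: the work is entirely in matching the parameters of the general proposition to the corollary's statement and confirming that one lands in the relaxed range of admissible indices rather than the primary one.
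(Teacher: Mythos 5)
Your argument is exactly the paper's: the corollary is stated as a direct specialization of Proposition~\ref{heat-decay} with $q=\infty$, $\beta=0$, and $|\eta|=0$ or $1$, and you correctly note that the hypothesis $\alpha>0$ is what places this endpoint case in the relaxed range $1\le p\le q\le+\infty$ requiring $\alpha>\beta$. The parameter matching and the resulting exponents $\tfrac12(\tfrac{n}{p}+\alpha+|\eta|)$ are all correct.
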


\begin{proposition}\label{prop.simple}
	\label{compactness-heat-kernel}
	Let $n\geq 2$, $1 \le p  \le +\infty$. Then, for any fixed $t>0$, the operator $e^{t\Delta}$ is compact from $L^p(\R^n,|x|^{\alpha p})$ to $L^\infty(\R^n)$ provided that
	\begin{equation}
	0<\alpha<\frac{n}{p'}.
	\end{equation}
\end{proposition}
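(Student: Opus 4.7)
The plan is to verify the hypotheses of a suitable Arzel\`a--Ascoli-type theorem on $\R^n$: a family of continuous functions is relatively compact in $L^\infty(\R^n)$ as soon as it is uniformly bounded, equicontinuous, and exhibits uniform decay at infinity. Let $\mathcal{F} = \{f \in L^p(\R^n, |x|^{\alpha p}) : \||x|^\alpha f\|_{L^p}\leq 1\}$ be the closed unit ball; I would show that $e^{t\Delta}\mathcal{F}$ satisfies these three conditions at fixed $t>0$. Observe first that the hypothesis $0<\alpha<n/p'$ forces $p>1$, since for $p=1$ one has $n/p'=0$, so the statement is vacuous at that endpoint.

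Uniform boundedness of $e^{t\Delta}\mathcal{F}$ in $L^\infty$ is immediate from estimate \eqref{ecc.1}, which gives $\|e^{t\Delta}f\|_{L^\infty} \leq C t^{-(n/p+\alpha)/2}\||x|^\alpha f\|_{L^p}$. Equicontinuity follows from \eqref{ecc.2}, which yields the uniform gradient bound $\|\nabla e^{t\Delta}f\|_{L^\infty} \leq C t^{-(n/p+\alpha+1)/2}\||x|^\alpha f\|_{L^p}$; every element of $e^{t\Delta}\mathcal{F}$ is then Lipschitz with the same constant, independent of $f$.

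The main step, and the one I expect to be the crux, is verifying uniform decay at infinity, i.e.\ $\sup_{f \in \mathcal{F}}|e^{t\Delta}f(x_0)|\to 0$ as $|x_0|\to\infty$. Splitting the weight and applying H\"older's inequality,
\[
|e^{t\Delta}f(x_0)| \le \int_{\R^n} h_t(x_0-y)\,|y|^{-\alpha}\,|y|^{\alpha}|f(y)|\,dy \le \|h_t(x_0-\cdot)\,|\cdot|^{-\alpha}\|_{L^{p'}}\,\||y|^\alpha f\|_{L^p},
\]
the problem reduces to showing $\|h_t(x_0-\cdot)|\cdot|^{-\alpha}\|_{L^{p'}}\to 0$. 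To see this I would decompose $\R^n$ into $\{|y|\leq |x_0|/2\}$ and $\{|y|>|x_0|/2\}$. On the inner region, $|x_0-y|\geq |x_0|/2$ produces a Gaussian factor $e^{-|x_0|^2/(16t)}$, while $|y|^{-\alpha p'}$ is locally integrable precisely because $\alpha<n/p'$, its truncated integral growing at worst polynomially in $|x_0|$. On the outer region, $|y|^{-\alpha}\leq (|x_0|/2)^{-\alpha}$ provides a polynomial decay factor $|x_0|^{-\alpha p'}$, while $\int_{\R^n} h_t^{p'}$ is finite. Both contributions vanish as $|x_0|\to\infty$.

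Once the three properties are in hand, the compactness follows by a standard diagonal argument: from any bounded sequence $\{f_k\}$ one extracts a subsequence for which $e^{t\Delta}f_{k_j}$ converges uniformly on each ball $B_R$ (by Arzel\`a--Ascoli applied to the uniformly equicontinuous and uniformly bounded family), and uniform tail decay then upgrades this to convergence in $L^\infty(\R^n)$. The principal obstacle is the decay-at-infinity step, where one must balance the Gaussian concentration of $h_t$ against the singularity of $|y|^{-\alpha}$ at the origin; the condition $\alpha<n/p'$ is precisely what makes this balance work.
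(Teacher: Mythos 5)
Your proof is correct, and its skeleton coincides with the paper's: uniform boundedness and equicontinuity from Corollary \ref{prop2}, Arzel\`a--Ascoli plus a diagonal extraction on balls, and then a uniform-decay-at-infinity estimate to upgrade local uniform convergence to convergence in $L^\infty(\R^n)$. The one place where you genuinely diverge is the decay step, which is indeed the crux. The paper gets it by invoking Proposition \ref{heat-decay} with a \emph{positive} output weight: choosing $0<\delta<\alpha$ it bounds $\||x|^\delta e^{t\Delta}u_j\|_{L^\infty}\le C\||x|^\alpha u_j\|_{L^p}$ and reads off $\sup_{|x|>k}|e^{t\Delta}u_j|\le C k^{-\delta}$. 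You instead prove the decay by hand: H\"older against the weight, followed by the splitting $\{|y|\le|x_0|/2\}$ (Gaussian factor $e^{-c|x_0|^2/t}$ times a polynomially growing integral of $|y|^{-\alpha p'}$, finite precisely because $\alpha<n/p'$) and $\{|y|>|x_0|/2\}$ (factor $|x_0|^{-\alpha}$ times $\|h_t\|_{L^{p'}}$). Your version is self-contained and elementary --- it does not lean on the cited weighted heat-kernel estimates of \cite{Luca} for this step, and it even yields the slightly sharper rate $|x_0|^{-\alpha}$ rather than $k^{-\delta}$ for $\delta<\alpha$ --- at the cost of a short explicit computation; the paper's route is shorter because the needed weighted estimate is already available as a black box. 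Your side remarks (that $p=1$ is vacuous under $0<\alpha<n/p'$, and that the gradient bound gives a global Lipschitz constant) are accurate.
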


\begin{proof}
	Let $\{u_j\}_{j\in\N}$ be a bounded sequence in $L^p(\R^n,|x|^{\alpha p})$, so that
	$$ \| |x|^\alpha u_j \|_{L^p} \leq  C.$$
	Let $v_j= e^{t \Delta} u_j$. Then by Corollary \ref{prop2}, $\{v_j\}_{j\in\N}$ is bounded in $L^\infty(\R^n)$. 
	
	For each $k \in \N$, let us consider the compact set
	$$ C_{k} = \{ x \in \R^n : 0 \leq |x| \leq k \}. $$
	The estimates of Corollary \ref{prop2} also  imply that $\{v_j\}_{j\in\N}$ is equibounded in $C_k$, and so are 
	their first order derivatives, hence $\{v_j\}_{j\in\N}$ is also equicontinuous in $C_k$. Using the Arzel\'a-Ascoli theorem and Cantor's diagonal argument, we conclude that passing 
	again to a subsequence we may assume that
	$$ v_j \to v \; \hbox{uniformly in each} \; C_k. $$
	Since $\alpha>0$ we can choose  $\delta>0$ such that $0<\delta<\alpha$.
	Then, by Proposition \ref{heat-decay} we get
	\begin{align*}
	\sup_{|x|>k} |v_j|  & \leq  \sup_{|x|>k} \left(\frac{|x|}{k}\right)^\delta \;  |v_j|   \\
	&\leq \frac{1}{k^\delta}   \| |x|^\delta v_j  \|_{L^\infty}
	\leq \frac{C}{k^\delta}  \| |x|^\alpha u_j \|_{L^p} \leq \frac{C_1}{k^\delta}
	\end{align*}
	which tends to $0$ as $k \to \infty$, uniformly in $j$. A standard argument gives that $v_j \to v$ strongly in $L^\infty(\R^n)$.
	
\end{proof}

\section{Improved Inequalities}

This section is devoted to  establishing the improved Stein-Weiss and Caffarelli-Kohn-Nirenberg inequalities. 

\begin{theorem}
\label{teorema-besov-infty}
Let
$n\ge 2$, $0<s<n$, $1<p\le r$, $\alpha<\frac{n}{p^\prime}$, $-\gamma<\frac{n}{r}$, 
$\alpha- \frac{\gamma}{\theta} \geq 0$, $\mu>0$,
$ \max\{\frac{p}{r},\frac{\mu}{\mu+s}\} \le \theta \le 1$, and 

\be
\label{reescale-infty}
 \gamma + \frac{n}{r}= \theta \Big(\alpha + \frac{n}{p}-s \Big) + (1-\theta) \mu.
 \ee
Then, for every $f \in L^p(\R^n,|x|^{\alpha p})  \cap \dot{B}^{-\mu-s}_{\infty,\infty}$ 
there holds:
\begin{equation*} 
\| |x|^\gamma (-\Delta)^{-s/2} f  \|_{L^r} \leq 
C \; \| |x|^{\alpha} f  \|_{L^p}^\theta \;  \| f \|_{\dot{B}^{-\mu-s}_{\infty,\infty}}^{1-\theta}
\end{equation*}
or, equivalently, for every $u\in \dot{H}^{s,p}_\alpha(\R^n) \cap 
\dot{B}^{-\mu}_{\infty,\infty} $ 
\begin{equation*} 
 \| |x|^\gamma u \|_{L^r} \leq 
C \; \| |x|^{\alpha} (-\Delta)^{s/2} u  \|_{L^p}^\theta \;  \| u \|_{\dot{B}^{-\mu}_{\infty,\infty}}^{1-\theta}.
\end{equation*}

\end{theorem}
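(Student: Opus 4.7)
The plan is to establish the inequality in its first form via the subordination formula
$$
(-\Delta)^{-s/2}f(x)=\frac{1}{\Gamma(s/2)}\int_0^\infty t^{s/2-1}e^{t\Delta}f(x)\,dt,
$$
splitting the integral at an arbitrary threshold $T>0$ and treating the two pieces differently. For the tail $t>T$, the thermic characterization (Definition \ref{thermal-def}) gives $\|e^{t\Delta}f\|_{L^\infty}\le t^{-(\mu+s)/2}\|f\|_{\dot{B}^{-\mu-s}_{\infty,\infty}}$, whose contribution is $O(T^{-\mu/2}\|f\|_{\dot{B}^{-\mu-s}_{\infty,\infty}})$. For the short-time piece $t<T$, the Gaussian heat kernel is dominated by the Hardy-Littlewood maximal operator, $|e^{t\Delta}f(x)|\le C\,Mf(x)$, contributing $O(T^{s/2}Mf(x))$. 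Optimizing $T=T(x)$ pointwise so that the two contributions balance yields
$$
|(-\Delta)^{-s/2}f(x)|\le C(Mf(x))^{\theta_0}\|f\|_{\dot{B}^{-\mu-s}_{\infty,\infty}}^{1-\theta_0},\qquad \theta_0:=\frac{\mu}{\mu+s}.
$$

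Taking the weighted $L^r$-norm of this pointwise bound and pulling out the constant Besov factor reduces the distinguished case $\theta=\theta_0$ of the theorem to the weighted norm inequality
$$
\||x|^{\gamma/\theta_0}Mf\|_{L^{r\theta_0}}\le C\||x|^\alpha f\|_{L^p},
$$
a Stein-Weiss-type estimate for the Hardy-Littlewood maximal operator in the spirit of Muckenhoupt and Wheeden. Its scaling requirement $\gamma/\theta_0+n/(r\theta_0)=\alpha+n/p$ is exactly \eqref{reescale-infty} at $\theta=\theta_0$, and its weight/integrability conditions $p\le r\theta_0$, $\alpha-\gamma/\theta_0\ge 0$, $\alpha<n/p'$, $-\gamma<n/r$ are precisely the hypotheses of the theorem evaluated at this value. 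To cover a general $\theta\in[\max\{p/r,\theta_0\},1]$, I would interpolate between this endpoint and the classical Stein-Weiss inequality (Theorem \ref{SW}), which plays the role of the $\theta=1$ endpoint, using log-convexity of weighted Lebesgue norms:
$$
\||x|^\gamma g\|_{L^r}\le \||x|^{\gamma_1}g\|_{L^{r_1}}^\lambda\,\||x|^{\gamma_0}g\|_{L^{r_0}}^{1-\lambda},
$$
valid whenever $\gamma=\lambda\gamma_1+(1-\lambda)\gamma_0$ and $1/r=\lambda/r_1+(1-\lambda)/r_0$. Taking $\lambda=(\theta-\theta_0)/(1-\theta_0)\in[0,1]$, with $(\gamma_i,r_i)$ the exponents forced by \eqref{reescale-infty} at the two endpoints, a direct algebraic check confirms that this convex combination reproduces \eqref{reescale-infty} for the target $(\gamma,r)$; combining the two endpoint estimates then yields $\||x|^\alpha f\|_{L^p}^\theta\|f\|_{\dot{B}^{-\mu-s}_{\infty,\infty}}^{1-\theta}$ on the right-hand side.

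The equivalent formulation for $u\in\dot{H}^{s,p}_\alpha$ follows from the substitution $f=(-\Delta)^{s/2}u$ together with the lifting identity $\|f\|_{\dot{B}^{-\mu-s}_{\infty,\infty}}=\|u\|_{\dot{B}^{-\mu}_{\infty,\infty}}$. The main obstacle I anticipate is the weighted maximal inequality step: one must match the power-weight conditions precisely to the hypotheses, and in particular the lower bound $\theta\ge p/r$ is what forces the Muckenhoupt-Wheeden two-weight theory to apply, by ensuring that the maximal operator maps the weighted $L^p$ into a weighted space of no smaller integrability exponent. Everything else amounts to careful bookkeeping of the scaling identity \eqref{reescale-infty} across the interpolation.
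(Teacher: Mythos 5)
Your decomposition of $(-\Delta)^{-s/2}$ via the heat semigroup, the tail bound $O(T^{-\mu/2}\|f\|_{\dot B^{-\mu-s}_{\infty,\infty}})$, the maximal-function bound for the short-time piece, and the pointwise optimization reproduce exactly the paper's argument at the endpoint $\theta_0=\mu/(\mu+s)$. One caution there: substituting $\theta=\theta_0$ into \eqref{reescale-infty} and using $\alpha-\gamma/\theta\ge 0$ forces $\theta_0=p/r$ and $\gamma_0 r_0=\alpha p$, so the weighted maximal estimate you need is the \emph{diagonal} $A_p$ bound for $M$ with the power weight $|x|^{\alpha p}$. A genuinely off-diagonal power-weight bound for $M$ with $r\theta_0>p$ strictly (which your appeal to ``two-weight Muckenhoupt--Wheeden theory'' suggests) is false: testing on $f=\chi_{B(x_0,\epsilon)}$ with $x_0\neq 0$ and $\epsilon\to 0$ shows the maximal operator cannot raise the integrability exponent.

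The genuine gap is in your treatment of $\theta>\theta_0$ by log-convexity of weighted norms. Your algebra forces the $\theta=1$ endpoint to satisfy $\gamma_1+n/r_1=\alpha+n/p-s$, and Theorem \ref{SW} requires $\gamma_1>-n/r_1$, i.e.\ $\alpha+n/p>s$. This is \emph{not} implied by the hypotheses: take $n=2$, $p=2$, $\alpha=0$, $s=3/2$, $\mu=1/10$, $\theta=1/10$, $\gamma=0$, $r=50$. All hypotheses of the theorem hold and $\theta>\theta_0=1/16$, yet $\alpha+n/p-s=-1/2<0$, so there is no admissible Stein--Weiss configuration at $\theta=1$ and the interpolation cannot even be set up. The paper avoids this by re-running the pointwise optimization in the regime $\theta>\theta_0$: setting $2\varepsilon=\mu/\theta-\mu>0$, the truncated kernel is bounded by $CT^{\varepsilon}|x|^{-(n-s+2\varepsilon)}$, which yields $|(-\Delta)^{-s/2}f|\le C\,(I_{s-2\varepsilon}f)^{\theta}\,\|f\|_{\dot B^{-\mu-s}_{\infty,\infty}}^{1-\theta}$, and Theorem \ref{SW} is then applied with smoothness $s-2\varepsilon$ and target exponent $\tilde r=\theta r$, for which the positivity condition reduces to $\gamma+n/r>0$, a standing hypothesis. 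You would need to replace your interpolation step by an argument of this kind (or else add the restrictive hypothesis $\alpha+n/p>s$, under which your endpoint checks for $p\le r_1$, $r_1<\infty$ and $\alpha\ge\gamma_1$ do go through and the Hölder interpolation closes the proof).
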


\begin{proof}

Notice that the case $\theta=1$ corresponds to Theorem \ref{SW}, so we may restrict ourselves to the case $\theta<1$.

Let $ u := (-\Delta)^{-s/2} f$, where $f \in L^p(\R^n,|x|^{\alpha p}dx)$. Hence, we write
$$ u =  \frac{1}{\Gamma(s/2)} \int_{0}^\infty t^{s/2-1} e^{t\Delta} f \; dt  $$
and, for fixed $T>0$ to be chosen later, we split the above integral in high and low frequencies, setting
$$ Hf(x) := \frac{1}{\Gamma(s/2)} \int_{0}^T t^{s/2-1} e^{t\Delta} f
\; dt $$
and 
$$ Lf(x) := \frac{1}{\Gamma(s/2)} \int_{T}^\infty t^{s/2-1} e^{t\Delta} f
\; dt.$$

We will obtain pointwise bounds for $Lf$ and $Hf$.

To bound $Lf$, we proceed as in \cite{Ch}, using the thermal definition of Besov spaces (Definition \ref{thermal-def})
 to deduce that 
\be
\label{cota-Lf}
|Lf(x)| \leq C \; \int_T^\infty t^{-\mu/2-1} \; 
 \| f \|_{\dot{B}^{-\mu-s}_{\infty, \infty}} \; dt = 
C T^{-\mu/2} \;  \| f \|_{\dot{B}^{-\mu-s}_{\infty,\infty}} .
\ee

\;

To bound $Hf$, we have to consider different cases, according to whether $\theta=\frac{\mu}{\mu+s}$ or $\theta>\frac{\mu}{\mu+s}$.

\;

{\bf First case: $\theta=\frac{\mu}{\mu+s}.$}

Observe that in this case, we must also have $\theta=\frac{p}{r}$. Indeed, replacing   $\theta=\frac{\mu}{\mu+s}$ in \eqref{reescale-infty} and rearranging terms we have
$$
\frac{\mu+s}{r}-\frac{\mu}{p}=\frac{\alpha \mu}{n} -\frac{\gamma (\mu+s)}{n} \ge 0
$$
where the last inequality follows from the condition $\alpha-\frac{\gamma}{\theta}\ge 0$ and the fact that $\mu >0$. This immediately implies that $\frac{p}{r}\ge \frac{\mu}{\mu+s}$ but, since the reverse inequality holds by hypothesis, we obtain $\frac{p}{r}= \frac{\mu}{\mu+s}=\theta$.

Now, replacing $\theta=\frac{\mu}{\mu+s}=\frac{p}{r}$ in  \eqref{reescale-infty}, we obtain $\alpha p =\gamma r$. This will be useful in what follows.

Having established the relations between the parameters, we remark that this case is contained in \cite[Annexe C]{Ch}, but we outline the result here for the sake of completeness. Following  \cite{Ch}, we obtain
\be
\label{cota-Hf-limite}
|Hf(x)|\le C T^{s/2} Mf(x),
\ee
 where $Mf$ is the Hardy-Littlewood maximal function. 
 
 Now we choose $T$ to optimize the sum of $\eqref{cota-Lf}$ and $\eqref{cota-Hf-limite}$, namely
 $$
 T=\left( \frac{\|f\|_{\dot{B}^{-\mu-s}_{\infty,\infty}}}{Mf} \right)^\frac{2}{\mu+s},
 $$
and we arrive at the pointwise bound 
$$
|(-\Delta)^{-s/2}f| \le C (Mf)^\theta \|f\|_{\dot{B}^{-\mu-s}_{\infty,\infty}}^{1-\theta}.
$$

This implies 
\begin{align*}
\|(-\Delta)^{-s/2}f |x|^\gamma\|_{L^r} & \le C \|(Mf)^\theta |x|^\gamma\|_{L^r} \|f\|_{\dot{B}^{-\mu-s}_{\infty,\infty}}^{1-\theta}\\
& = C \|Mf  |x|^{\gamma r/p}\|_{L^p}^\theta \|f\|_{\dot{B}^{-\mu-s}_{\infty,\infty}}^{1-\theta}\\
& \le C \|f  |x|^\alpha\|_{L^p}^\theta \|f\|_{\dot{B}^{-\mu-s}_{\infty,\infty}}^{1-\theta}
\end{align*}
where we have used the relations between the parameters and the fact that $|x|^{\alpha p}=|x|^{\gamma r }$ belongs to the Muckenhoupt class $A_p$ since $-n<\gamma r = \alpha p<n(p-1)$ by hypothesis, and hence the Hardy-Littlewood maximal function is continuous in $L^p$ with that weight.

\medskip

{\bf Second case: $\theta>\frac{\mu}{\mu+s}.$}

In this case,  observe that
$$ Hf(x)= (K_{s,T} * f)(x) $$
where
$$ K_{s,T}(x)= \frac{1}{\Gamma(s/2)} \int_0^T t^{s/2-1} \; h_t(x) \; dt $$
and
$$ h_t(x)= \frac{1}{(4\pi t)^{n/2}} \exp \left\{-\frac{|x|^2}{4t} \right\} $$
is the heat kernel. 

Now, setting $2\varepsilon =\mu/\theta -\mu >0$ and  noting that $(n-s)/2+\varepsilon>0$, we have that 
$$
	 e^{-x} \leq \frac{C}{x^{(n-s)/2+\varepsilon}} \quad \; \hbox{for} \; 
	x >0
 $$
whence
\begin{align*}
0 \leq K_{s,T}(x) & \leq C \; \int_0^T t^{(s-n)/2-1} 
 \left(\frac{4t}{|x|^2} \right)^{(n-s)/2+\varepsilon} \; dt \\
& \leq C \; \frac{1}{|x|^{n-s+2\varepsilon}} \int_0^T
t^{-1+\varepsilon} \; dt  \\
&\leq C \frac{1}{|x|^{n-s+2\varepsilon}}
T^\varepsilon .
\end{align*}

Hence, 
\be 
\label{cota-Hf-subcritica}
|Hf(x)| \leq C \;  T^\varepsilon \; I_{s-2\varepsilon}f(x) \ee
and to optimize the sum of $\eqref{cota-Lf}$ and $\eqref{cota-Hf-subcritica}$ we have choose 
$$ T= \left( \frac{ \| f
\|_{\dot{B}^{-\mu-s}_{\infty,\infty}}}{I_{s-2\varepsilon}f(x)}
\right)^{1/(\varepsilon+\mu/2)} .$$

Hence, in this case we have the pointwise bound

$$ |(-\Delta)^{-s/2} f(x)|\leq C I_{s-2\varepsilon}f(x)^\theta 
\| f \|_{\dot{B}^{-\mu-s}_{\infty,\infty}}^{1-\theta}. $$

Setting $\tilde{r}=\theta r$, taking $r$-norm and using Theorem \ref{SW}, we have 
\begin{align}
\| |x|^\gamma (-\Delta)^{-s/2} f \|_{L^r} &\leq 
C \; \| |x|^{\gamma/\theta} I_{s-2\varepsilon}f \|_{L^{\tilde{r}}}^\theta \| f
\|_{\dot{B}^{-\mu-s}_{\infty,\infty}}^{1-\theta} \nonumber \\
&\leq 
C \; \| |x|^{\alpha} f \|_{L^p}^\theta \| f \|_{\dot{B}^{-\mu-s}_{\infty,\infty}}^{1-\theta}.
\label{nuestra-desigualdad}
\end{align}
It remains to check the  conditions of Theorem \ref{SW}: $\alpha<\frac{n}{p'}$ and $\alpha-\frac{\gamma}{\theta} \ge 0$  are immediate, while $-\frac{\gamma}{\theta}<\frac{n}{\tilde r},  p\le \tilde r$ and $\frac{1}{\tilde{r}} = \frac{1}{p} +\frac{\alpha-\gamma{/\theta}-(s-2\varepsilon)}{n}$ follow by our choice  of $\tilde r$ and $\varepsilon$ and the hypotheses of our theorem.

This proves our first inequality. To prove  the equivalence with the second one we need to use the lifting property of Besov spaces 
$$ 
\| f
\|_{\dot{B}^{-\mu-s}_{\infty,\infty}} =
\| (-\Delta)^{s/2} u
\|_{\dot{B}^{-\mu-s}_{\infty,\infty}}  = 
\| u
\|_{\dot{B}^{-\mu}_{\infty,\infty}}  
$$
and we arrive at the desired inequality.

\end{proof}

As announced, an analogous result can also be obtained in the local case $s=1$, with the gradient instead of the 
fractional Laplacian. Indeed, Theorem \ref{teorema-besov-infty} implies the following refined 
weighted Sobolev inequality, which is an improvement of the Caffarelli-Kohn-Nirenberg inequalities 
\cite{CKN} in some cases:

\begin{theorem}
\label{teorema-besov-local-case}
Let $n>1$, $1<p\le r$, $\alpha<\frac{n}{p^\prime}$, $-\frac{n}{r}<\gamma<\frac{n}{r^\prime}$, 
$\alpha- \frac{\gamma}{\theta} \geq 0$, $\mu>0$,
$ \max\{\frac{p}{r},\frac{\mu}{\mu+1}\} \le \theta \le 1$, and 

\begin{equation*}
 \gamma + \frac{n}{r}= \theta \Big(\alpha + \frac{n}{p}-1 \Big) + (1-\theta) \mu.
\end{equation*}
Then, for every $u\in  \dot{H}^{1,p}_\alpha(\R^n) \cap 
\dot{B}^{-\mu}_{\infty,\infty} $ 
\begin{equation*}
 \| |x|^\gamma u \|_{L^r} \leq 
C \; \| |x|^{\alpha} \nabla u  \|_{L^p}^\theta \;  \| u \|_{\dot{B}^{-\mu}_{\infty,\infty}}^{1-\theta}.
\end{equation*}
\end{theorem}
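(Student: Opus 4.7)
The plan is to reduce this statement to Theorem \ref{teorema-besov-infty} applied with $s=1$, and then convert the norm of $(-\Delta)^{1/2}u$ into one involving $\nabla u$ by means of a weighted Riesz transform bound.

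First, I would check that under the hypotheses given here, the hypotheses of Theorem \ref{teorema-besov-infty} with $s=1$ are satisfied. Every condition on $p, r, \alpha, \mu, \theta$ and the rescaling identity transfer directly when one sets $s=1$; note that $n>1$ amounts to $n \ge 2$ here, and the only lower bound on $\gamma$ that Theorem \ref{teorema-besov-infty} requires, namely $\gamma > -n/r$, is part of our hypotheses. Applying Theorem \ref{teorema-besov-infty} yields
\[
\| |x|^\gamma u \|_{L^r} \le C \, \| |x|^{\alpha} (-\Delta)^{1/2} u \|_{L^p}^\theta \, \| u \|_{\dot{B}^{-\mu}_{\infty,\infty}}^{1-\theta}.
\]

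The remaining step, which is the main analytic content, is to replace $(-\Delta)^{1/2} u$ by $\nabla u$ in the weighted $L^p$ norm, i.e., to establish
\[
\| |x|^{\alpha} (-\Delta)^{1/2} u \|_{L^p} \le C \, \| |x|^{\alpha} \nabla u \|_{L^p}.
\]
This would follow from the classical representation $(-\Delta)^{1/2} u = \sum_{j=1}^n R_j \partial_j u$, where $R_j$ denotes the $j$-th Riesz transform, together with the boundedness of the Riesz transforms on $L^p(w)$ for every weight $w$ in the Muckenhoupt class $A_p$. For the power weight $w(x) = |x|^{\alpha p}$, membership in $A_p$ is equivalent to $-\tfrac{n}{p} < \alpha < \tfrac{n}{p'}$.

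The main obstacle therefore reduces to verifying the lower Muckenhoupt bound $\alpha > -n/p$ from the hypotheses, since the upper bound is already explicit. Here I would chain the inequalities $\alpha \ge \gamma/\theta$, $\gamma > -n/r$, and $\theta \ge p/r$: the last implies $r\theta \ge p$, hence $-n/(r\theta) \ge -n/p$, and combining yields $\alpha \ge \gamma/\theta > -n/(r\theta) \ge -n/p$, as required. With $|x|^{\alpha p} \in A_p$ secured, the weighted Riesz transform bound is applicable and the argument is complete.
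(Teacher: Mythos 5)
Your proof is correct, but it routes the Riesz transforms through the other side of the inequality than the paper does. The paper writes $u=-\sum_j R_j^2u$ and first moves the $L^r(|x|^{\gamma r})$ norm of $u$ onto the $R_ju$'s, which requires $|x|^{\gamma r}\in A_r$ --- this is exactly where the hypothesis $\gamma<\frac{n}{r'}$ is used; it then applies Theorem \ref{teorema-besov-infty} to each $R_ju$, identifies $(-\Delta)^{1/2}R_ju=\partial_j u$, and additionally invokes the boundedness of $R_j$ on $\dot B^{-\mu}_{\infty,\infty}$ to control the Besov factor. You instead apply Theorem \ref{teorema-besov-infty} directly to $u$ and only afterwards convert $\||x|^\alpha(-\Delta)^{1/2}u\|_{L^p}$ into $\||x|^\alpha\nabla u\|_{L^p}$ via $(-\Delta)^{1/2}u=-\sum_j R_j\partial_j u$, which needs $|x|^{\alpha p}\in A_p$; your verification $\alpha\ge\gamma/\theta>-\frac{n}{r\theta}\ge-\frac{n}{p}$ of the lower Muckenhoupt bound is valid. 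Your version is arguably cleaner: it avoids the Besov boundedness of the Riesz transforms altogether, and it never uses the hypothesis $\gamma<\frac{n}{r'}$, so it in fact establishes the theorem under weaker assumptions (at the price of needing the $A_p$ condition on $|x|^{\alpha p}$, which you correctly show is already implied by the remaining hypotheses). Both arguments are at the same level of rigor concerning the distributional identities for the Riesz transforms.
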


\begin{proof}
We consider the classical Riesz transforms $R_j$,
$$ R_j u= \frac{\partial}{\partial x_j} (-\Delta)^{-1/2} u.$$
Since $R_j$ is a Calder\'on-Zygmund operator, it is bounded in $L^r(\R^n,|x|^{\gamma r})$, 
because the weight $|x|^{\gamma r}$ belongs to the Muckenhoupt class $A_r$ by hypothesis. Moreover, using the Fourier 
transform, it is easy to check that
$$ \sum_{j=1}^n R_j^2 = - I.$$
Then, 
$$ \| |x|^\gamma u \|_{L^r} \leq  
\sum_{j=1}^n   \| |x|^\gamma  R_j^2 u \|_{L^r} \leq C \sum_{j=1}^n  
\|  |x|^\gamma  R_j u \|_{L^r}.$$
We apply Theorem \ref{teorema-besov-infty}
 (with $s=1$) in order to obtain 
\begin{align*}
\| |x|^\gamma R_j u  \|_{L^r} & \leq 
C \; \| |x|^{\alpha} (-\Delta)^{1/2} R_j u  \|_{L^p}^\theta \;  \| R_j u \|_{\dot{B}^{-\mu}_{\infty,\infty}}^{1-\theta} \\
& \leq  \; C \; \left \| |x|^{\alpha} \frac{\partial u}{\partial x_j}  \right \|_{L^p}^\theta \;  \| R_j u \|_{\dot{B}^{-\mu}_{\infty,\infty}}^{1-\theta} \\
& \leq  \; C \; \left \| |x|^{\alpha} \frac{\partial u}{\partial x_j}  \right \|_{L^p}^\theta \;  \| u \|_{\dot{B}^{-\mu}_{\infty,\infty}}^{1-\theta}
\end{align*}
since $R_j$ is a bounded operator in the Besov space $\dot{B}^{-\mu}_{\infty,\infty}$ (see \cite{Grafakos}).
Hence, we conclude that 
\begin{align*}
\| |x|^\gamma u \|_{L^r} \leq C \sum_{j=1}^n \left \| |x|^{\alpha} \frac{\partial u}{\partial x_j}  \right \|_{L^p}^\theta \;  \| u \|_{\dot{B}^{-\mu}_{\infty,\infty}}^{1-\theta} \\
\leq C  \left \| |x|^{\alpha} \nabla u  \right \|_{L^p}^\theta \;  \| u \|_{\dot{B}^{-\mu}_{\infty,\infty}}^{1-\theta}
\end{align*}
as announced.
\end{proof}

\section{Local compactness of the embedding}

In this section, we prove a version of the 
Rellich-Kondrachov theorem for the weighted
homogeneous Sobolev space $\dot{H}^{s,p}_\alpha(\R^n)$. We shall need it
in the proof of Theorem \ref{existence-of-maximizers}, but we believe that it could be of
independent interest for the study of other fractional elliptic problems. It is worth 
noting that this result does not seem to follow directly from the standard unweighted version of the compactness theorem, since it is not easy to perfom truncation arguments due to the non-local nature of the fractional Laplacian operator in the definition \eqref{Sobolev-definition} of the space $ \dot{H}^{s,p}_\alpha(\R^n)$. Instead, we work directly with the definition of the weighted space.

\begin{theorem}
	\label{teo.compacidad} 
	Let $n\geq 1$, $0 < s< n$, $1 < p \leq  q <\infty$. Assume further that $\alpha, \beta$ and $s$ satisfy the set of conditions 
	\begin{equation}
	\beta > -\frac {n}{q},\qquad \alpha<\frac{n}{p'}, \qquad
	\alpha \geq \beta
	\end{equation}
	and
	\be \alpha + \frac{n}{p} > s> \frac{n}{p}-\frac{n}{q} + \alpha-\beta >0. \label{subcriticality} \ee
	Then, for any compact set $\mathcal{K} \subset \R^n$, we have the compact embedding
	\be \dot{H}^{s,p}_\alpha(\R^n) \subset L^q(\mathcal{K},|x|^{\beta q}). \label{embedding}  \ee
	\label{thm-local-compactness}
\end{theorem}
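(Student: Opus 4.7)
The plan is to mimic the heat-semigroup splitting used in the proof of Theorem \ref{teorema-besov-infty}: given $u=(-\Delta)^{-s/2}f$ with $f\in L^p(\R^n,|x|^{\alpha p})$, I would write
$$
u=\frac{1}{\Gamma(s/2)}\int_0^\infty t^{s/2-1} e^{t\Delta}f\,dt \;=\; H_T f + L_T f,
$$
where $H_T f$ carries the integral over $(0,T)$ and $L_T f$ the integral over $(T,\infty)$. The goal is to show that, for each fixed $T>0$, the operator $L_T$ is compact from $L^p(\R^n,|x|^{\alpha p})$ into $L^q(\mathcal{K},|x|^{\beta q})$, while $H_T$ satisfies a bound of the form $\|H_T f\|_{L^q(\R^n,|x|^{\beta q})} \le CT^\varepsilon \|f\|_{L^p(\R^n,|x|^{\alpha p})}$ with a fixed $\varepsilon>0$. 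An $\varepsilon/2$-type argument then yields the compactness.

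For the small-time piece I would reuse verbatim the pointwise estimate from the second case of the proof of Theorem \ref{teorema-besov-infty}: for any $\varepsilon\in (0,s/2)$ the convolution kernel of $H_T$ is dominated by $CT^\varepsilon |x|^{-(n-s+2\varepsilon)}$, so
$$
|H_T f(x)| \le C\, T^\varepsilon\, I_{s-2\varepsilon}f(x).
$$
Taking $\varepsilon = \tfrac12\bigl(s-\alpha+\beta -\tfrac{n}{p} + \tfrac{n}{q}\bigr)$ makes the Stein-Weiss scaling $1/q = 1/p + (\alpha-\beta-(s-2\varepsilon))/n$ hold by construction, and the three inequalities in \eqref{subcriticality} translate exactly into $0<\varepsilon<s/2$ and the operator-norm integrability used below. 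The remaining Stein-Weiss hypotheses ($p\le q$, $\alpha<n/p'$, $\beta>-n/q$, $\alpha\ge\beta$) are among the standing assumptions, so Theorem \ref{SW} applied to $I_{s-2\varepsilon}f$ yields the desired $T^\varepsilon$-bound on $\|H_T f\|_{L^q(\R^n,|x|^{\beta q})}$.

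For the large-time piece, Proposition \ref{compactness-heat-kernel} provides that each $e^{t\Delta}$ is compact from $L^p(\R^n,|x|^{\alpha p})$ to $L^\infty(\R^n)$, and Corollary \ref{prop2} gives the operator-norm bound $\|e^{t\Delta}\|_{\mathrm{op}} \le C t^{-(n/p+\alpha)/2}$. The subcriticality hypothesis $s<\alpha+n/p$ ensures $\int_T^\infty t^{s/2-1-(n/p+\alpha)/2}\,dt <\infty$, so the operator-valued Bochner integral defining $L_T$ converges in operator norm and can be approximated by Riemann sums of compact operators; since the compact operators form a closed two-sided ideal, $L_T$ itself is compact into $L^\infty(\R^n)$. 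Since $\mathcal{K}$ is compact and $\beta q>-n$, the embedding $L^\infty(\mathcal{K})\hookrightarrow L^q(\mathcal{K},|x|^{\beta q})$ is continuous, which transfers the compactness to the target space.

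To conclude, given a bounded sequence $\{f_j\}$ in $L^p(\R^n,|x|^{\alpha p})$, I would pick $T_k\downarrow 0$, use compactness of each $L_{T_k}$ together with Cantor's diagonal extraction, and combine the uniform estimate on $H_{T_k}$ with a Cauchy argument to produce a subsequence of $u_j=(-\Delta)^{-s/2}f_j$ that converges in $L^q(\mathcal{K},|x|^{\beta q})$. The main obstacle is not conceptual but bookkeeping: one must verify that the reduced fractional order $s-2\varepsilon$ still sits inside the valid Stein-Weiss range. Remarkably, the three nested inequalities making up the subcriticality assumption \eqref{subcriticality} encode precisely what is needed — $\varepsilon>0$, $s-2\varepsilon>0$, and operator-norm integrability of the tail.
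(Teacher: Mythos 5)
Your route is genuinely different from the paper's. The paper truncates the Riesz kernel \emph{spatially}, proving a pseudo-Poincar\'e lemma $\| (K_{s}^t * f - K_{s}* f) |x|^\beta \|_{L^q} \leq C t^{\delta} \| |x|^\alpha f \|_{L^p}$ with $\delta = s-(\tfrac{n}{p}-\tfrac{n}{q}+\alpha-\beta)$, and then gets compactness of the truncated piece from weak convergence in $L^p(|x|^{\alpha p})$ plus the fact that $K_s^t(x_0-\cdot)\in L^{p^\prime}(\R^n,|x|^{-\alpha p^\prime})$ uniformly for $x_0\in\mathcal{K}$. You truncate in \emph{time} instead, and your small-time estimate is the exact parabolic counterpart of the paper's lemma (your $\varepsilon$ equals $\delta/2$; the bookkeeping with Stein--Weiss at order $s-2\varepsilon$ is correct, and you rightly observe that the three inequalities of \eqref{subcriticality} are precisely $\varepsilon>0$, $\varepsilon<s/2$, and integrability of the tail). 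Your large-time step, realizing $L_T$ as an operator-norm-convergent integral of compact operators, is cleaner than a diagonal extraction and buys a structural statement (the embedding is an operator-norm limit of compact operators); the paper's argument is more elementary, using only H\"older, dominated convergence and weak compactness.

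There is, however, one genuine gap: your large-time step invokes Proposition \ref{prop.simple} and Corollary \ref{prop2}, both of which require $0<\alpha<\frac{n}{p'}$ (and $n\ge 2$), whereas Theorem \ref{teo.compacidad} only assumes $\alpha<\frac{n}{p'}$. The positivity of $\alpha$ is not cosmetic: for $\alpha=0$ (which the theorem permits whenever $p<q$, e.g.\ the unweighted case $\alpha=\beta=0$) the operator $e^{t\Delta}$ is \emph{not} compact from $L^p(\R^n)$ into $L^\infty(\R^n)$ --- translates $f_j=f(\cdot-je_1)$ give a bounded sequence with no subsequence of $e^{t\Delta}f_j$ converging in $L^\infty(\R^n)$ --- so $L_T$ is not compact into $L^\infty(\R^n)$ and your ideal-of-compacts argument breaks at that point. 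The fix is to localize: you only need compactness into $C(\mathcal{K})$, and a local Arzel\`a--Ascoli argument (equiboundedness and equicontinuity of $e^{t\Delta}f_j$ on $\mathcal{K}$, which hold for all $\alpha<\frac{n}{p'}$ with $s<\alpha+\frac{n}{p}$ by estimating $\|h_t(x_0-\cdot)|\cdot|^{-\alpha}\|_{L^{p'}}$ uniformly for $x_0\in\mathcal{K}$) does the job; but this local version is not what Proposition \ref{prop.simple} states, so you would need to prove it. As written, your argument establishes the theorem only in the range $0<\alpha<\frac{n}{p'}$ --- which does cover the application in Theorem \ref{existence-of-maximizers}, but not the full statement.
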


We observe that \eqref{subcriticality} is a subcriticality condition. Indeed, under the hypotheses 
of the theorem
$$ s> \frac{n}{p}-\frac{n}{q} $$ 
and if $s<\frac{n}{p}$, this is equivalent to
$$ q < p^*= \frac{np}{n-sp}.$$

\medskip

We first prove the continuity of the embedding \eqref{embedding}. Let $u \in \dot{H}^{s,p}_\alpha(\R^n)$. Then
$$ u =(-\Delta)^{-s/2} f  \;  \hbox{with} \; f \in L^p(\R^n,|x|^{\alpha p}) $$
and
$$ \| u \|_{\dot{H}^{s,p}_\alpha} = \| |x|^\alpha f \|_{L^p(\R^n)}.$$
We define a new exponent $\tilde{q}$ satisfying the Stein-Weiss scaling condition
$$  \frac{n}{\tilde{q}} := \frac{n}{p} + \alpha-\beta -s.$$
From \eqref{subcriticality} it follows that $\tilde{q}>q$. We define:
$$ r:= \frac{\tilde{q}}{q}>1, \tilde{\beta}:=  \frac{\beta q}{\tilde{q}} = \frac{\beta}{r}  $$
so that
$$ \beta > -\frac {n}{q} \Leftrightarrow \tilde{\beta}  > -\frac {n}{\tilde{q}} $$
and apply H\"older's inequality with exponents $r$ and $r^\prime$ to obtain
\begin{align*}
\int_{\mathcal{K}} |u|^q |x|^{\beta q} \; dx &= \int_{\mathcal{K}} |u|^q |x|^{\beta q/r} |x|^{\beta q/r^\prime} \; dx \\
&\leq \left( \int_{\mathcal{K}} |u|^{\tilde{q}} |x|^{\tilde{\beta} \tilde{q}} \; dx \right)^{1/r} 
\left( \int_{\mathcal{K}} |x|^{\beta q} \; dx \right)^{1/r^\prime} \\
& \leq C_{\mathcal{K}} \left( \int_{\R^n} |u|^{\tilde{q}} |x|^{\tilde{\beta} \tilde{q}} \; dx \right)^{1/r}
\end{align*}
since $\beta>-\frac{n}{q}$. Then, using Theorem \ref{SW}
\begin{align*}
\left( \int_{\mathcal{K}} |u|^q |x|^{\beta q} \; dx \right)^{1/q} &\leq C_{\mathcal{K}}^{1/q} \; 
\left( \int_{\R^n} |u|^{\tilde{q}} |x|^{\tilde{\beta} \tilde{q}} \; dx \right)^{1/\tilde{q}} \\
&\leq C \left( \int_{\R^n}  |f|^p  |x|^{\alpha p} \; \right)^{1/p} \\
&= C \| u \|_{\dot{H}^{s,p}_\alpha} .
\end{align*}
This shows that \eqref{embedding} is a continuous embedding.

\medskip

The main difficulty in the proof of the local compactness is that the kernel
$$ K_s^t(x) = C(n,s) |x|^{-(n-s)} $$
of the Riesz potential is not in the dual 
space $L^{p^\prime}(\R^n,|x|^{-\alpha p^\prime})$. For this reason, 
we introduce for $t>0$ the truncated kernels
$$ K_s^t(x) = C(n,s) |x|^{-(n-s)} \chi_{\{|x|>t\}}.
 $$

The following lemma gives a kind of pseudo-Poincaré 
inequality using these kernels.

\begin{lemma}
	Under the conditions of Theorem \ref{teo.compacidad}, set 
	$$\delta = s - \left( \frac{n}{p}-\frac{n}{q} + \alpha-\beta  \right).$$
	Then, for any function, $u \in L^p(\R^n,|x|^{\alpha p})$ and any $t>0$, we have that
	$$ \| (K_{s}^t * u - K_{s}* u) |x|^\beta \|_{L^q} \leq C t^{\delta} \; \| |x|^\alpha u \|_{L^p}.$$
\end{lemma}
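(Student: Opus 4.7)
The plan is to reduce the estimate to the standard Stein--Weiss inequality for a Riesz potential of slightly lower order. I would introduce the auxiliary exponent
$$
s' := s - \delta = \frac{n}{p} - \frac{n}{q} + \alpha - \beta,
$$
so that by the subcriticality condition \eqref{subcriticality} we have $0 < s' < s < n$. A direct computation shows that the quintuple $(p,q,\alpha,\beta,s')$ satisfies the Stein--Weiss scaling identity
$$
\frac{1}{q} = \frac{1}{p} + \frac{\alpha - \beta - s'}{n},
$$
and the remaining hypotheses of Theorem \ref{SW} ($p \le q$, $-n/q < \beta$, $\alpha < n/p'$, $\alpha \ge \beta$) are inherited from those of Theorem \ref{teo.compacidad}. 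Therefore
$$
\bigl\| |x|^\beta I_{s'} g \bigr\|_{L^q} \leq C\, \bigl\| |x|^\alpha g \bigr\|_{L^p}
$$
for every admissible $g$.

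The key geometric observation is that the kernel of $K_s - K_s^t$ is $C(n,s)|y|^{-(n-s)}\chi_{\{|y|\le t\}}$, and on this support the factor $|y|^{s-s'} = |y|^\delta$ is bounded by $t^\delta$. Consequently,
$$
|y|^{-(n-s)}\chi_{\{|y|\le t\}} = |y|^{s-s'} |y|^{-(n-s')}\chi_{\{|y|\le t\}} \leq t^\delta\, |y|^{-(n-s')},
$$
so convolving against $|u|$ I obtain the pointwise domination
$$
\bigl| (K_s * u - K_s^t * u)(x) \bigr| \leq C\, t^\delta\, I_{s'}(|u|)(x).
$$

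Taking the weighted $L^q$-norm of this pointwise inequality and applying the Stein--Weiss bound from the first step to $g = |u|$ yields
$$
\bigl\| (K_s * u - K_s^t * u)\, |x|^\beta \bigr\|_{L^q} \leq C\, t^\delta \, \bigl\| |x|^\alpha u \bigr\|_{L^p},
$$
which is exactly the claimed estimate. There is no substantial obstacle: the entire argument amounts to choosing $s'$ so that the scaling of the lower-order Riesz potential matches the weighted $L^p \to L^q$ mapping we want, and then verifying that this $s'$ is admissible in Theorem \ref{SW} --- a verification which is precisely the content of the subcriticality hypothesis \eqref{subcriticality}.
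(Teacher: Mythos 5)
Your proposal is correct and follows essentially the same route as the paper: both arguments bound the kernel $|x-y|^{-(n-s)}\chi_{\{|x-y|\le t\}}$ pointwise by $t^{\delta}|x-y|^{-(n-(s-\delta))}$ and then invoke the Stein--Weiss inequality for the Riesz potential of order $s-\delta$, whose scaling condition is exactly what the definition of $\delta$ encodes. Your write-up is slightly more explicit about verifying the hypotheses of Theorem \ref{SW} for the auxiliary order $s'=s-\delta$, but the argument is the same.
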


\begin{proof}
	Notice that
	$$  K_{s}^t * f - K_{s}* f(x)= C(n,s) \int_{|x-y|\leq t} \frac{f(y)}{|x-y|^{n-s}} \; dy  .$$
	Hence, since $\delta>0$ by \eqref{subcriticality},
	\begin{align*}
	|K_{s}^t * f - K_{s}* f(x)| &\leq C(n,s) \int_{|x-y| \leq t} \frac{|f(y)|}{|x-y|^{n-s}} \; dy  \\
	&\leq C(n,s) \int_{|x-y|\leq t} \frac{|f(y)|}{|x-y|^{n-s}} 
	\left( \frac{t}{|x-y|} \right)^\delta dy \\
	&\leq C(n,s) \; t^\delta \; \int_{\R^n} \frac{|f(y)|}{|x-y|^{n-(s-\delta)}} \; dy 
	\end{align*}
	and the lemma follows from the Stein-Weiss inequality since $s-\delta>0$ (the definition of $\delta$ means 
	that the required scaling-condition holds with $s-\delta$ in place of $s$).
\end{proof}

\bigskip

Now we are ready to prove the compactness of the embedding \eqref{embedding}:
let $(u_k)$ be a bounded sequence $\dot H^{s,p}_\alpha(\R^n)$. We may write it as
$$ u_k = (-\Delta)^{-s/2} f_k = K_s * f_k $$
where $f_k$ is a bounded sequence in $L^p(\R^n,|x|^{\alpha p})$. By
reflexivity, passing to a subsequence, we may assume that 
$$ f_k \rightharpoonup f \; \hbox{weakly in} \; L^{p}(\R^n,|x|^{\alpha p}).$$ 
We consider the functions
$$ u_k^t = K_s^t *  f_k $$
$$ u^t = K_s^t *  f $$
and we write
$$ \| (u_k - u) |x|^\beta  \|_{L^q(\mathcal{K})} \leq
\| (u_k - u_k^t) |x|^\beta  \|_{L^q(\mathcal{K})} 
+ \| (u_k^t - u^t) |x|^\beta  \|_{L^q(\mathcal{K})}
+ \| (u^t -u) |x|^\beta  \|_{L^q(\mathcal{K})} . $$

We observe that
$$ \| (u_k - u_k^t) |x|^\beta  \|_{L^q(\mathcal{K})} \leq t^{\delta} \| f_k |x|^\alpha \|_{L^p}
\leq C t^\delta $$
and that
$$ \| (u^t - u) |x|^\beta  \|_{L^q(\mathcal{K})} \leq t^{\delta} \| f |x|^\alpha \|_{L^p} 
\leq C t^\delta.$$

Hence, given $\varepsilon>0$ we can make this two terms less than $\frac{\varepsilon}{3}$ 
for all $k$, provided that we fix $t$ small enough. 

We check that $K_s^t(x_0-\cdot )$ is in $L^{p^\prime}(\R^n,|x|^{-\alpha p^\prime})$. For that,
we consider the integral.
\begin{align*}
I(x) :=& \int_{\R^n} |K_s^t(x-y)|^{p^\prime} |y|^{-\alpha p^\prime} \; dy \\
=& \; C(n,s)^{p^\prime} \int_{|x-y|>t} \frac{1}{|x-y|^{(n-s)p^\prime}}
|y|^{-\alpha p^\prime} \; dy.
\end{align*}

The integrability  condition at zero is
$$ -\alpha p^\prime + n >0 \Leftrightarrow \alpha < \frac{n}{p^\prime} \;  $$
and at infinity is
$$ -\alpha p^\prime -(n-s)p^\prime +n <0 \Leftrightarrow s < \alpha + \frac{n}{p}. $$
Hence $I(x)$ is finite. We conclude that, that for any fixed $t>0$, and any fixed $x_0$ we have that
$$ u_{k}^t(x_0) = K_s^t * f_k(x_0) \to K_s^t * f(x_0) = u^t(x_0).$$

Moreover, we have that for any $x_0$ in the compact set $\mathcal{K}$
\be \int_{\R^n} |K_s^t(x_0-x)|^{p^\prime} |x|^{-\alpha p^\prime} \; dx \leq C_{\mathcal{K}}. \label{local-bound}\ee
Indeed, if we write
$$ I(x)= I_0(x) + I_\infty(x) $$
where $\mathcal{K} \subset B(0,R)$ and 
$$ I_0(x) = C(n,s)^{p^\prime} \int_{|x-y|>t,|y|\leq 2R} \frac{1}{|x-y|^{(n-s)p^\prime}}
|y|^{-\alpha p^\prime} \; dy,$$

$$ I_\infty(x) = C(n,s)^{p^\prime} \int_{|x-y|>t,|y|> 2R} \frac{1}{|x-y|^{(n-s)p^\prime}}
|y|^{-\alpha p^\prime} \; dy,$$
then,
$$ I_0(x) \leq C(n,s)^{p^\prime} \int_{y|\leq 2R} \frac{1}{t^{(n-s)p^\prime}}
|y|^{-\alpha p^\prime} \; dy = C(n,s)^{p^\prime} \frac{1}{t^{(n-s)p^\prime}} (2R)^{-\alpha p^\prime+n}.$$
On the other hand, when $x \in \mathcal{K}$ and $y$ is in the integration region of $I_\infty$
$$ |x|\leq R < \frac{1}{2} |y| $$
and
$$ |x-y| \geq |y|-|x| \geq |y| - \frac{1}{2} |y| = \frac{1}{2} |y|.$$
Hence,
\begin{align*} I_\infty(x) &\leq  C(n,s)^{p^\prime} \int_{|y|> 2R} \frac{1}{\left(\frac{1}{2}|y|\right)^{(n-s)p)^\prime}}
|y|^{-\alpha p^\prime} \; dy \\
&= \tilde{C}(s,n,p) \; R^{-(n-s)p^\prime-\alpha p^\prime + n} 
\end{align*}
which implies \eqref{local-bound}. Thus, $(u^t_k)$ is uniformly
bounded on $\mathcal{K}$, and by the bounded convergence theorem,
$$ \| (u_k^t - u^t) |x|^\beta  \|_{L^q(\mathcal{C})} \to 0 $$
as $k \to \infty$ (since the condition $\beta > -\frac nq$ means that the weight 
$|x|^{\beta q}$ is integrable on $\mathcal{K})$. Therefore, we can make it less than $\varepsilon/3$ for $k \geq k_0(\varepsilon)$.

We conclude that $u_k \to u$ strongly in $L^q(\R^n,|x|^{\beta q})$ as we wanted.

\section{Existence of maximizers of the Stein-Weiss Inequality}

In this section we prove our main theorem, which extends the result of Lieb  \cite[Theorem 5.1]{Lieb} to some previously unknown cases when $p=2$. 

The proof uses a well-known strategy, but the results are new thanks to our improved Stein-Weiss inequality (Theorem \ref{teorema-besov-infty}),  and the weighted compactness results  (Proposition \ref{prop.simple} and Theorem \ref{teo.compacidad}). First, we show that from any maximizing sequence we can extract -after a suitable rescaling- a subsequence with a non-zero weak limit. In the second part, we use the so-called ``method of missing mass'' (invented by E. Lieb in \cite{Lieb}) to prove that such a limit is actually an optimizer.   

\begin{theorem} \label{existence-of-maximizers}
Assume that $n\geq 2$, $0<s<\frac{n}{2}$, $2<r<\infty$, $0<\alpha<\frac{n}{2}$, $-\frac{n}{r}<\gamma < \alpha$ and that the relation
$$
	\frac{1}{r}-\frac{1}{2}=\frac{\alpha-\gamma-s}{n}
$$
holds. Then, there exists a maximizer for $S$.
\end{theorem}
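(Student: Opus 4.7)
The plan is to follow the classical ``method of missing mass'' of Lieb \cite{Lieb}, crucially aided by our improved inequality (Theorem \ref{teorema-besov-infty}) to rule out vanishing of the weak limit, by the weighted heat-semigroup compactness (Proposition \ref{prop.simple}) to locate the concentration, and by the local compactness (Theorem \ref{teo.compacidad}) to supply the a.e.\ convergence needed for the Brezis-Lieb step. Let $(f_k)\subset L^2(\R^n,|x|^{2\alpha})$ be a maximizing sequence normalized so that $\||x|^\alpha f_k\|_{L^2}=1$ and $\||x|^\gamma(-\Delta)^{-s/2}f_k\|_{L^r}\to S$. I would apply Theorem \ref{teorema-besov-infty} with $p=2$ and the sharp choice $\mu:=\alpha+\tfrac{n}{2}-s>0$; thanks to the scaling hypothesis $\tfrac{1}{r}-\tfrac{1}{2}=\tfrac{\alpha-\gamma-s}{n}$, the identity \eqref{reescale-infty} is then automatic for every $\theta$, so for any admissible $\theta\in(0,1)$ (the prescribed interval is non-empty since $r>2$, $\mu,s>0$ and $\gamma<\alpha$) one obtains
\begin{equation*}
S+o(1)\leq C\,\|f_k\|_{\dot{B}^{-\mu-s}_{\infty,\infty}}^{1-\theta},
\end{equation*}
hence $\|f_k\|_{\dot{B}^{-\mu-s}_{\infty,\infty}}\geq c_0>0$. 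By Definition \ref{thermal-def} there exist $t_k>0$ with $t_k^{(\mu+s)/2}\|e^{t_k\Delta}f_k\|_{L^\infty}\geq c_0/2$.

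With the above choice, $\mu+s=\alpha+\tfrac{n}{2}$, and a direct computation shows that the dilation $f\mapsto\lambda^{\alpha+n/2}f(\lambda\cdot)$ preserves simultaneously the three quantities $\||x|^\alpha f\|_{L^2}$, $\||x|^\gamma(-\Delta)^{-s/2}f\|_{L^r}$ and $\|f\|_{\dot{B}^{-\mu-s}_{\infty,\infty}}$. Replacing $f_k$ by its dilation at $\lambda_k=\sqrt{t_k}$, I may therefore assume that the maximizing sequence also satisfies $\|e^\Delta f_k\|_{L^\infty}\geq c_0/2$. Proposition \ref{prop.simple} applies since $0<\alpha<\tfrac{n}{2}$, so along a subsequence $e^\Delta f_k\to g$ uniformly on $\R^n$, with $\|g\|_{L^\infty}\geq c_0/2>0$. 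By reflexivity I may also assume $f_k\rightharpoonup f_\infty$ weakly in $L^2(\R^n,|x|^{2\alpha})$; and since $\alpha<\tfrac{n}{2}$ places $h_1(x-\cdot)$ in the dual $L^2(\R^n,|y|^{-2\alpha})$ for each fixed $x$ (the integrability at the origin is exactly this condition), weak convergence gives $e^\Delta f_k(x)\to e^\Delta f_\infty(x)$ pointwise. Combined with the uniform convergence to $g$, this forces $e^\Delta f_\infty=g\not\equiv 0$, so in particular $f_\infty\not\equiv 0$.

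To promote $f_\infty$ to a maximizer I would apply Theorem \ref{teo.compacidad} with $\beta=\gamma$ and some exponent $q\in\bigl(\tfrac{2n}{n+2(\alpha-\gamma)},\,r\bigr)$; the left endpoint lies below $r$ thanks to $s>0$, and the scaling equality $s=\tfrac{n}{2}-\tfrac{n}{r}+\alpha-\gamma$ becomes the strict subcriticality \eqref{subcriticality} for such $q$. This yields, along a further subsequence, $(-\Delta)^{-s/2}f_k\to(-\Delta)^{-s/2}f_\infty$ strongly in $L^q_{\mathrm{loc}}(|x|^{\gamma q}\,dx)$ and hence a.e.\ in $\R^n$. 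The Brezis-Lieb lemma then gives
\begin{equation*}
\||x|^\gamma(-\Delta)^{-s/2}f_k\|_{L^r}^r = \||x|^\gamma(-\Delta)^{-s/2}f_\infty\|_{L^r}^r + \||x|^\gamma(-\Delta)^{-s/2}(f_k-f_\infty)\|_{L^r}^r + o(1),
\end{equation*}
while expansion of $\||x|^\alpha f_k\|_{L^2}^2$ together with weak convergence yields $\||x|^\alpha(f_k-f_\infty)\|_{L^2}^2=1-M^2+o(1)$, where $M:=\||x|^\alpha f_\infty\|_{L^2}\in(0,1]$. Bounding both terms on the right of the Brezis-Lieb identity by the Stein-Weiss inequality \eqref{SW-ineq} (with optimal constant $S$) gives $1\leq M^r+(1-M^2)^{r/2}$; since $r>2$ the elementary inequality $t^{r/2}\leq t$ on $[0,1]$ forces $M=1$, whereupon the same identity yields $\||x|^\gamma(-\Delta)^{-s/2}f_\infty\|_{L^r}=S$.

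The main obstacle is preventing the weak limit from vanishing: translation invariance is broken by the weights, but dilation invariance is not, so mass could a priori escape to $0$ or $\infty$ under the scaling symmetry. The sharp choice $\mu=\alpha+\tfrac{n}{2}-s$ is exactly what makes the Besov norm dilation-invariant, so that after normalizing both the $L^2$ mass and the concentration time one can apply Proposition \ref{prop.simple}—whose proof uses $\alpha>0$ to control the tail and $\alpha<\tfrac{n}{2}$ to control the behaviour near the origin—to obtain strong convergence of $e^\Delta f_k$ in $L^\infty(\R^n)$ and hence a nontrivial pointwise limit.
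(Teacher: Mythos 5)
Your proposal is correct and follows essentially the same route as the paper's proof: the improved inequality of Theorem \ref{teorema-besov-infty} with $\mu=\alpha+\tfrac n2-s$ to get a Besov lower bound, parabolic rescaling to normalize the concentration time, Proposition \ref{prop.simple} to rule out a vanishing weak limit, Theorem \ref{teo.compacidad} for the a.e.\ convergence feeding Brezis--Lieb, and the strict concavity/missing-mass step to conclude. The only (harmless) deviations are cosmetic: you take $\beta=\gamma$ rather than $\beta=\alpha$ in the local compactness step, and you identify the $L^\infty$ limit of $e^{\Delta}f_k$ with $e^{\Delta}f_\infty$ via pointwise duality rather than invoking compactness of $e^{\Delta}$ directly on the weakly convergent sequence.
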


\begin{remark}
Notice that condition $0<s<\frac{n}{2}$ does not appear explicitly in  \cite[Theorem 5.1]{Lieb} but is implied by the other conditions on the parameters. Indeed, since $\alpha\ge \gamma$, we have that $1<r\le \frac{2n}{n-2s}$ and, in particular, there must hold $n-2s> 0$.
\end{remark}

\begin{proof}[Proof of Theorem \ref{existence-of-maximizers}]
	Let $\{f_k\}_{k\in\N}$ be a maximizing sequence of $S$, that is, 
	\begin{equation} \label{minim}
		\| |x|^\alpha  f_k \|_{L^2} = 1  \quad\mbox{and}\quad \| |x|^\gamma  (-\Delta)^{-s/2} f_k \|_{L^r} \to S .
	\end{equation}
	By Corollary \ref{remark-embedding-Besov}, if we set
	\begin{equation} \label{mu.1}
		\mu  = \frac{n}{2} + \alpha -s,
	\end{equation}
	it holds that
	$$ 
		\| f_k \|_{\dot{B}^{-\mu-s}_{\infty,\infty}} \leq
		C \| |x|^\alpha f_k \|_2 = C.
	$$
	On the other hand, Theorem \ref{teorema-besov-infty}
 gives that
	$$
		  \| f_k \|_{\dot{B}^{-\mu-s}_{\infty,\infty}} \geq C >0 
	$$
	provided we choose $\theta$ such that 
	$$
	\max \left\{ \frac{p}{r}, \frac{\mu}{\mu+s}, \frac{\gamma}{\alpha} \right\} < \theta <1
	$$
	which is possible by the hypotheses of the Theorem.
	This means that
	$$ 
		\sup_{t>0} \;  t^\frac{\mu+s}{2} \| e^{t\Delta} f_k \|_{L^\infty} \geq C>0.
	$$ 
	Consequently, for each $k \in \N$ we can find $t_k>0$ such that
	$$ 
		t_k^\frac{\mu+s}{2} \; \|  e^{t_k\Delta} f_k \|_{L^\infty} \geq \frac{C}{2} > 0.
	$$	
	Now we set
	\begin{equation} \label{reescale}
	 \tilde{f}_k(x)= t_k^{\frac{1}{2}(\frac{n}{2}+\alpha)} \;  f_k(t_k^\frac12 x),
	\end{equation}	
	and observe that, by parabolic scaling, 
	$$
	 e^{1.\Delta} \tilde f_k (x) = t^{\frac12 (\frac{n}{2}+\alpha)} e^{\Delta t_k} f_k( t_k^{\frac12} x).
	$$	
	Then,
	\begin{align} \label{ectt}
	\begin{split}
		\|  e^{1.\Delta} \tilde f_k\|_{L^\infty}&=t_k^{\frac{1}{2}(\frac{n}{2}+\alpha)}
		\| e^{t_k\Delta}   f_k(t_k^\frac12 x)\|_{L^\infty}\\
		&=t_k^\frac{\mu+s}{2}  \|   e^{t_k \Delta}   f_k\|_{L^\infty} \geq \frac{C}{2}>0
	\end{split}		
	\end{align} 	
	since relation \eqref{mu.1} holds.

	Observe that, in view of the scaling invariance of the $L^2(\R^n,|x|^{2 \alpha})$ norm, the sequence $\{\tilde f_k\}_k$ is bounded in $L^2(\R^n,|x|^{2\alpha })$ and,  that the maximization problem \eqref{maxim2} is invariant under the rescaling given by $\tilde f_k$ as long as
	$$
		\gamma=\frac{n}{2}-\frac{n}{r}+\alpha-s,
	$$
	which holds by our assumptions. Indeed, 
	\begin{equation} \label{acotado}
		 \| |x|^\alpha \tilde{f}_k\|_{L^2} = \| |x|^\alpha f_k\|_{L^2}=1.
	\end{equation}
	Consequently, $\{\tilde f_k\}_k$ is also a minimizing sequence of $S$, that is, 
	\begin{equation}  \label{minimftilde}
		\| |x|^\alpha  \tilde f_k \|_{L^2} = 1  \quad\mbox{and}\quad \| |x|^\gamma  (-\Delta)^{-s/2} \tilde f_k \|_{L^r} \to S .
	\end{equation}

	It remains to show that there exists $g\not\equiv 0$ such that $\tilde f_k\to g$ strongly in $L^2(\R^n,|x|^{2\alpha  })$. The last requirement will allow us to deduce that $g$ is also a minimizer for $S$, i.e., 
	\begin{equation}  \label{minimg}
		\| |x|^\alpha    g \|_{L^2} = 1  \quad\mbox{and}\quad \| |x|^\gamma  (-\Delta)^{-s/2}   g \|_{L^r} = S .
	\end{equation}	
	
	By reflexivity, from \eqref{acotado} there exists $g\in L^2(\R^n,|x|^{2\alpha  })$ and a subsequence still denoted by $\tilde f_k$ such that 
	\begin{equation} \label{conv.debil}
		\tilde{f}_{k} \rightharpoonup g \quad \mbox{ weakly in } L^2(\R^n, |x|^{ 2\alpha  }).
	\end{equation}
	We set
	$$
		u_k := (-\Delta)^{-s/2} \tilde f_k, \quad w := (-\Delta)^{-s/2} g.
	$$
	From Proposition \ref{prop.simple},   the compactness of the operator $e^{1.\Delta}$ implies that
	$$ e^{1.\Delta} \tilde{f}_{k} \to e^{1.\Delta} g \;\quad  \hbox{strongly in} \; L^\infty(\R^n) $$
	and then $g\not\equiv 0$, since by \eqref{ectt} we have
	$$
		 \| e^{1.\Delta} g \|_{L^\infty} \geq \frac{C}{2} > 0.
	$$
	
	By Theorem \ref{teo.compacidad} with $2<q<r$ and $\alpha=\beta$, for any compact set $K$ we have 	the compact embedding 
	$$ 
		\dot{H}^{s,2}(\R^n) \subset L^{q}(K) 
	$$
	which implies that passing to a subsequence we may assume that
	$$ 
		u_{k} \to w \quad \mbox{strongly in } L^q(K)
	$$
	and, therefore, up to a subsequence,  $u_{k} \to w $ a.e. in $K$. By using a diagonal argument we obtain that, again, up to a subsequence, 
	\begin{equation} \label{conve}
		u_k \to w \quad \mbox{ a.e. } \R^n.
	\end{equation}
	
	Let us prove that \eqref{minimg} holds. 	Since $u_k\to w$ a.e. $\R^n$, the Brezis-Lieb Lemma (\cite[Lemma 2.6]{Lieb} and \cite{Brezis-Lieb}) claims that
	$$
		\lim_{k\to\infty}\left( \int_{\R^n} |u_k|^r |x|^{r\gamma}\,dx - \int_{\R^n} |u_k -w |^r |x|^{r\gamma}\,dx \right) = \int_{\R^n} |w|^r |x|^{r\gamma}\,dx,
	$$
	but, from \eqref{minimftilde},
	\begin{equation} \label{BL}
		\lim_{k\to\infty} \int_{\R^n} |u_k|^r |x|^{r\gamma}\,dx  =   \int_{\R^n} |w|^r |x|^{r\gamma}\,dx  + \lim_{k\to\infty}  \int_{\R^n} |u_k -w |^r |x|^{r\gamma}\,dx.	
	\end{equation}

	Combining \eqref{BL} with \eqref{minimftilde} and the elementary inequality 
	\begin{equation} \label{des}
		a^\frac{r}{2}+b^\frac{r}{2} \leq (a+b)^\frac{r}{2}
	\end{equation}
	 for $a,b\geq 0$ and $r>2$, we have
	\begin{align*}
		S^r&=\lim_{k\to\infty} \int_{\R^n} |u_k|^r |x|^{r\gamma}\,dx  =   \int_{\R^n} |w|^r |x|^{r\gamma}\,dx  + \lim_{k\to\infty}  \int_{\R^n} |u_k -w |^r |x|^{r\gamma}\,dx\\
		&\leq S^r\left(  \int_{\R^n} |g|^2 |x|^{2\alpha}\,dx  \right)^\frac{r}{2} + S^r \left( \limsup_{k\to\infty}  \int_{\R^n} |\tilde f_k -g|^2 |x|^{ 2 \alpha } \, dx\right)^\frac{r}{2}\\
		&\leq S^r \left (  \int_{\R^n} |g|^2 |x|^{2 \alpha }\,dx  + \limsup_{k\to\infty}  \int_{\R^n} |\tilde f_k -g|^2 |x|^{2\alpha} \, dx  \right)^\frac{r}{2} =S^r,
	\end{align*}
where we have used that
	\begin{align} \label{ec1}
	\begin{split}
	\int_{\R^n} |g|^2 |x|^{2 \alpha} &\,dx + 
\limsup_{k\to\infty}  \int_{\R^n} |\tilde f_k - g|^2 |x|^{2\alpha } \,dx\\ 
&= \limsup_{k\to\infty} \int_{\R^n} |\tilde f_k|^2 |x|^{2\alpha}  \,dx=1
	\end{split}
	\end{align}
	since $\tilde{f}_{k} \rightharpoonup g$ weakly in $L^2(\R^n, |x|^{ 2\alpha})$.

	Observe that \eqref{des} is a strict inequality unless $a=0$ or $b=0$. Hence, since all the previous inequalities are in fact equalities, we obtain that $\||x|^\alpha g\|_{L^2}=1$ and $\tilde f \to g$ strongly in $L^2(\R^n,|x|^{2\alpha})$.
	
	By Theorem \ref{SW}, $(-\Delta)^{-s/2}$ is a continuous operator from $L^2(\R^n,|x|^{2\alpha})$ into $L^r(\R^n,|x|^{\gamma r})$ and then
	$$
		u_k \to w \quad \mbox{ strongly in } L^r(\R^n, |x|^{\gamma r})
	$$
	from where \eqref{minimg} follows. The proof is now complete.

\end{proof}

\bibliographystyle{amsplain}
\bibliography{biblio}
 
\end{document}